\def\div{\mathop{\rm div}\nolimits}
\numberwithin{equation}{section}
\newtheorem{Theorem}{Theorem}
\newtheorem{Lemma}[Theorem]{Lemma}
\newtheorem{Proposition}[Theorem]{Proposition}
\newtheorem{Definition}[Theorem]{Definition}
\newtheorem{remark}[Theorem]{Remark}
\newtheorem{remarks}[Theorem]{Remarks}
\newenvironment{Remark}{\begin{remark} \rm}{\rule{2mm}{2mm}\end{remark}}
\numberwithin{Theorem}{section}
\def\sideremark#1{\ifvmode\leavevmode\fi\vadjust{\vbox to0pt{\vss
\hbox to0pt{\hskip\hsize\hskip1em
\vbox{\hsize3cm\tiny\raggedright\pretolerance10000
\noindent #1\hfill}\hss}\vbox to8pt{\vfil}\vss}}}
\title[Coupled nonlinear fractional Schr\"odinger equations]{Nonlinear Fractional Schr\"odinger Equations coupled by power--type nonlinearities}
\keywords{Nonlinear Schr\"odinger Equations, Critical Point Theory, Ground States, Bound States, Fractional Laplacian.}%
\subjclass[2010]{Primary 34G20, 35Q55, 35B38, 35J50}
\author{Eduardo Colorado}
\author{Alejandro Ortega}
\email[E. Colorado]{ecolorad@math.uc3m.es}
\email[A. Ortega ]{alortega@math.uc3m.es}
\address[E. Colorado, A. Ortega]{Departamento de Matem\'aticas,
Universidad Carlos III de Madrid, Av. Universidad 30, 28911 Legan\'es (Madrid), Spain}
\thanks{The authors are partially supported by MCIN/AEI/10.13039/501100011033/ under research project PID2019-106122GB-I00. They are also partially supported by the Madrid Government (Comunidad de Madrid-Spain) under the Multiannual Agreement with UC3M in the line of Excellence of University Professors (EPUC3M23), and in the context of the V PRICIT (Regional Programme of Research and Technological Innovation).}
\begin{document}
\maketitle

\begin{abstract}
In this work we study the following class of systems of coupled nonlinear fractional nonlinear Schr\"odinger equations,
\begin{equation*}
\left \{
\begin{array}{l}
(-\Delta)^s u_1+ \lambda_1 u_1= \mu_1 |u_1|^{2p-2}u_1+\beta |u_2|^{p} |u_1|^{p-2}u_1 \quad\text{in }\mathbb{R}^N,\\[3pt]
(-\Delta)^s  u_2 + \lambda_2 u_2= \mu_2 |u_2|^{2p-2}u_2+\beta |u_1|^{p}|u_2|^{p-2}u_2 \quad\text{in }\mathbb{R}^N,
\end{array} \right.
\end{equation*}
where $ u_1,\, u_2\in W^{s,2}(\mathbb{R}^N)$, with $ N=1,\, 2,\, 3$; $\lambda_j,\,\mu_j>0$, $j=1,2$, $\beta\in \mathbb{R}$, $p\geq 2$ and $\displaystyle\frac{p-1}{2p}N<s<1$.
Precisely, we  prove the existence of positive radial bound and ground state solutions provided the parameters $\beta, p, \lambda_j,\mu_j$, ($j=1,\, 2$) satisfy appropriate conditions. We also study the previous system with $m$-equations,
$$
(-\Delta)^s u_j+ \lambda_j u_j =\mu_j |u_j|^{2p-2}u_j+ \sum_{\substack{k=1\\k\neq j}}^m\beta_{jk}  |u_k|^p|u_j|^{p-2}u_j,\quad u_j\in W^{s,2}(\mathbb{R}^N);\: j=1,\ldots,m
$$
where  $\lambda_j,\, \mu_j>0$ for $j=1,\ldots ,m\ge 3$, the coupling parameters $\beta_{jk}=\beta_{kj}\in \mathbb{R}$ for $j,k=1,\ldots,m$, $j\neq k$. For this system we prove similar results as for $m=2$, depending on the values of the parameters $\beta_{jk}, p, \lambda_j,\mu_j$, (for $j,k=1,\ldots,m$, $j\neq k$).
\end{abstract}

\section{Introduction}
In this paper we will study the existence of positive solutions to the following system of coupled nonlinear fractional Schr\"odinger (NLFS) equations involving the fractional Laplace operator:
\begin{equation}\label{eq:sistema-fractional}
\left \{
\begin{array}{l}
(-\Delta)^s u_1+ \lambda_1 u_1= \mu_1 |u_1|^{2p-2}u_1+\beta |u_2|^{p}|u_1|^{p-2}u_1\quad\text{in }\mathbb{R}^N,\\[3pt]
(-\Delta)^s  u_2 + \lambda_2 u_2= \mu_2 |u_2|^{2p-2}u_2+\beta |u_1|^p |u_2|^{p-2}u_2\quad\text{in }\mathbb{R}^N,
\end{array} \right.
\end{equation}
where we assume $u_j\in W^{s,2}(\mathbb{R}^N)$ with $ N=1,\,2,\, 3$; $\lambda_j,\,\mu_j>0$, $j=1,2$, the coupling factor $\beta\in \mathbb{R}$, $p\geq 2$ and $\displaystyle\frac{p-1}{2p}N<s<1$. Note that the condition $\displaystyle\frac{p-1}{2p}N<s<1$ implies $2p<2_s^*$, where $2_s^*= \displaystyle\frac{2N}{N-2s}$ if $s<\displaystyle\frac{N}{2}$ and $2_s^*=\infty $ if $s\geq\displaystyle\frac{N}{2}$, is the critical Sobolev exponent. While condition $p\geq 2$ ensures that the energy functional associated to the system \eqref{eq:sistema-fractional} is a $\mathcal{C}^2$ functional.

We also study the previous system with $m$-equations,
\begin{equation}\label{eq:sys-m}
(-\Delta)^s u_j+ \lambda_j u_j =\mu_j |u_j|^{2p-2}u_j+ \sum_{\substack{k=1\\k\neq j}}^m\beta_{jk}  |u_k|^p|u_j|^{p-2}u_j,\:\: u_j\in W^{s,2}(\mathbb{R}^N);\: j=1,\ldots,m
\end{equation}
where  $\lambda_j,\, \mu_j>0$ for $j=1,\ldots ,m\ge 3$, the coupling parameters $\beta_{jk}=\beta_{kj}\in \mathbb{R}$ for $j,k=1,\ldots,m$, $j\neq k$;
proving similar results as for $m=2$, depending on the values of the parameters $\beta_{jk}, p, \lambda_j,\mu_j$, (for $j,k=1,\ldots,m$, $j\neq k$).

\

Problems like \eqref{eq:sistema-fractional} have been widely investigated with the classical Laplacian ($s=1$) in the last $16$ years. It seems to be very complicated to give a complete list of references (cf. for example \cite{ac1,ac2,BW, BWW,ChenZou,col2,DW,DWW,dfl,linwei,linwei2,LinWu,liu-wang,mmp,pomp,sirakov,TV, WW,WW2} and the references therein).

On the other hand, it is well known that solutions of \eqref{eq:sistema-fractional}, at least for the classical case ($s=1$) are related to the solitary waves of the Gross-Pitaevskii equations, which have applications in many physical models, such as in nonlinear optics (cf. \cite{akanbook,ac2, Ma, Me}) and in Bose-Einstein condensates for multi-species condensates (cf. \cite{CLLL,R}). For example, in optics, a planar light beam propagating in the $z$ direction in a non-linear medium, can be described by a vector  nonlinear Schrödinger equation (NLS) equation like
$$
i\, \textbf{E}_z+ \textbf{E}_{xx}+ \kappa |\textbf{E}|^2 \textbf{E}=0,
$$
where $i$, $\textbf{E}(x,z)$  denote the imaginary unit and  the complex envelope of an Electric field, respectively. Without loss of generality, we can assume $\kappa=1$. If $\textbf{E}$ is the sum of two right- and left-hand polarized waves $a_1E_1$ and $a_2E_2$, $a_j\in\mathbb{R}$, then solitary wave solutions $E_j(z,x)=e^{{\rm i}\lambda_j z}u_j(x)$, where $\lambda_j>0$ and $u_j(x)$ are real valued functions, satisfy the system
\begin{equation}\label{eq:11}
 \left \{
\begin{array}{l}
- (u_1)_{xx}+ \lambda_1 u_1 = (a_1^2 u_1^2+a_2^2 u_2^2)u_1,\\
- (u_2)_{xx} + \lambda_2 u_2 = (a_1^2 u_1^2+a_2^2 u_2^2)u_2.
\end{array} \right.
\end{equation}
If we take the coupling factor $a_2^2:=\beta$ as a parameter and
let the coefficients of $u_j^3$, namely $\mu_j>0$, to be different then
\eqref{eq:11} becomes
\begin{equation*}
 \left \{
\begin{array}{l}
- u''_1+ \lambda_1 u_1 = \mu_1 u_1^3+\beta u_2^2 u_1,\\
- u''_2 + \lambda_2 u_2 = \mu_2 u_2^3+\beta u_1^2u_2.
\end{array} \right.
\end{equation*}
This system corresponds to \eqref{eq:sistema-fractional} with $N=1$, $s=1$ and $p=2$.
When one consider the NLFS equation in $\mathbb{R}^N$,
\begin{equation*}
i\, \textbf{E}_z -(-\Delta)^s\textbf{E}+ \kappa |\textbf{E}|^2 \textbf{E}=0,
\end{equation*}
looking for solitary wave solutions, as in the above discussion, one arrives to system \eqref{eq:sistema-fractional} with $p=2$. Let us also remark that system \eqref{eq:sistema-fractional} can be seen as the fractional analogue to the problem studied in \cite{oliveira}, where similar results for ground states were obtained.

 We point out that this type of nonlocal diffusion operators involving the fractional Laplacian $(-\Delta)^s$, $0<s<1$, arises in several several physical phenomena like flames propagation and
chemical reactions in liquids, population dynamics, geophysical fluid dynamics, also in probability, American option in finance and also in $\alpha$-stable L\'evy processes (with $\alpha=2s$) (see for instance  \cite{Applebaum,Bertoin,Cont-Tankov,VIKH}).

Here we are interested in systems of coupled NLS equations involving the so called fractional Laplacian operator (or fractional Schrödinger operator, $(-\Delta)^s+\lambda\, \mbox{Id}$, (cf. \cite{L,L2})).

\

The main goal of this work is to give a
classification of positive solutions of \eqref{eq:sistema-fractional}
and study the system with $m$-equations \eqref{eq:sys-m}, for which we will show similar existence results. Precisely, we will prove:

\noindent -Existence of positive radial ground states under the
following hypotheses:
\begin{itemize}
\item $p=2$ and the coupling coefficient $\beta>\Lambda'$; see Theorem \ref{th:ac},
\item $p\ge 2$ and the coupling coefficient $\beta$ satisfying hypothesis \eqref{eq:hyp}; see Theorem \ref{th:posground3}.
\end{itemize}
-Existence of radial bound states when:
\begin{itemize}
\item $p=2$ and $\beta<\Lambda$; see Theorem \ref{th:acb}-$(i)$ which are positive provided $\beta>0$,
\item $p>2$ and $\beta\in \mathbb{R}$; see Theorem \ref{th:acb}-$(ii)$, which are positive when $\beta>0$,
\item $p\geq 2$ and $\beta\sim 0$, proving also a bifurcation result; see Theorem \ref{th:acb}-$(iii)$ and Theorem \ref{th:pert}, which are positive for $\beta>0$.
\end{itemize}
 The paper contains 4 more sections. In Section \ref{sec:not-prel} we introduce notation, some back ground on the fractional Laplacian and give the definition of bound and ground states. Section \ref{sec:kr} contains the definition of the Nehari manifold, its properties and some preliminary results based on the Nehari Manifold as a {\it natural constraint} and the key results for getting  the main existence theorems, which are stated and proved in Section \ref{sec:pf2}, divided into two subsections: proving the existence of bound states in the first one, while the second one is devoted to prove the existence of ground states.
 Finally, the aim of Section \ref{sec:ext} is to show some extensions to systems with more than two equations.

\section{Preliminaries and Notation}\label{sec:not-prel}
Given $0<s<1$, the nonlocal operator $(-\Delta)^s$ in $\mathbb{R}^N$ is defined on
the Schwartz class of functions $g\in
\mathcal{S}$ through the Fourier transform,
\begin{equation}\label{fourier}
    [(-\Delta)^s  g]^{\wedge}\,(\xi)=(2\pi|\xi|)^{2s} \,\widehat{g}(\xi),
\end{equation}
or via the Riesz potential, see for example~\cite{Landkof,Stein}. Observe that
$s=1$ corresponds to the standard local Laplacian. See also \cite{brcdps,col,dpv,fl,fls}, where problems dealing with equations involving the fractional Laplacian are studied.

There is another way of defining this operator. In fact, for $s=\frac12$, the square root of the Laplacian acting on a function $u$ in the whole space $\mathbb{R}^N$, can be calculated as the normal derivative on the boundary of its harmonic extension to the upper half-space $\mathbb{R}^{N+1}_+$, this is so-called Dirichlet to Neumann operator. Based on this idea, Caffarelli and Silvestre (cf. \cite{Caffarelli-Silvestre}) proved that this operator can be realized  in a local way by using one more variable and the so called $s$-harmonic extension.

More precisely, given $u$ a regular function in $\mathbb{R}^N$, we define its $s$-harmonic extension to the upper half-space $\mathbb{R}^{N+1}_+$, denoted by $w=E_s[u]$, as the solution to the problem
\begin{equation}\label{extension}
\left\{
\begin{array}{rl}
-\div(y^{1-2s}\nabla w)=0&\mbox{ in } \mathbb{R}^{N+1}_+,\\ [5pt]
w=u&\mbox{ on } \mathbb{R}^N\times\{y=0\}.
\end{array}
\right.
\end{equation}
The key point of the $s$-harmonic extension comes from the following identity
 \begin{equation}
\lim\limits_{y\to0^+}y^{1-2s}\dfrac{\partial w}{\partial
y}(x,y)=-\dfrac{1}{\kappa_s}(-\Delta)^su(x),
\label{normalder}\end{equation}
proved in \cite{Caffarelli-Silvestre}, where $\kappa_s$ is a positive constant.
%%%%%%%%%%%%%%%%%%%%%%%%%%%%%%%%%%%%%%%%%%%%%%%%%%%%%%%%%%%%%%%%%%%%%%%%%%%%
%%%%%%%%%%%%%%%%%%%%%%%%%%%%%%%%%%%%%%%%%%%%%%%%%%%%%%%%%%%%%%%%%%%%%%%%%%%%
%%%%%%%%%%%%%%%%%%%%%%%%%%%%%%%%%%%%%%%%%%%%%%%%%%%%%%%%%%%%%%%%%%%%%%%%%%%%
The above Dirichlet-Neumann procedure
\eqref{extension}--\eqref{normalder} provides a formula for the
fractional Laplacian in $\mathbb{R}^N$, equivalent to that obtained
from Fourier transform, see \eqref{fourier}. In that
case, the $s$-harmonic extension and the fractional Laplacian
have explicit expressions in terms of the Poisson and the
Riesz kernels:
\begin{equation}
  \label{poisson}
\begin{split}
w(x,y)=P_y^s*u(x)&=
c_{N,s}\,y^{2s}\int_{\mathbb{R}^N}\dfrac{u(z)}{(|x-z|^{2}+y^{2})^{\frac{N+2s}{2}}}\,dz\,,
\\
(-\Delta)^su(x)&=
d_{N,s} P.V.\int_{\mathbb{R}^N}\frac{u(x)-u(y)}{|x-y|^{N+2s}}\,dy\,.
\end{split}
\end{equation}
The natural functional spaces are the homogeneous fractional Sobolev space
$\dot H^s(\mathbb{R}^N)$ and the weighted Sobolev space
$X^s(\mathbb{R}^{N+1}_+)$, that can be defined as the completion of $\mathcal{C}_0^{\infty}(\mathbb{R}^N)$ and $\mathcal{C}_{0}^{\infty}(\overline{\mathbb{R}^{N+1}_+})$ respectively, under the norms
\begin{equation}\label{normas}
\begin{array}{l}
\displaystyle\|\psi\|^2_{\dot{H}^s}=\int_{\mathbb{R}^N}|2\pi\xi|^{2s}|\widehat{\psi}(\xi)|^2\,d\xi
=\int_{\mathbb{R}^N}|(-\Delta)^{\frac{s}{2}}\psi(x)|^2\, dx,\\[10pt]
\displaystyle\|\phi\|^2_{X^s}=\kappa_s\int_{\mathbb{R}^{N+1}_+} y^{1-2s}|\nabla
\phi(x,y)|^2\,dx dy,
\end{array}
\end{equation}
where $\kappa_s$ is the constant in \eqref{normalder}. Furthermore, the constants in \eqref{poisson} and
\eqref{normalder} satisfy the identity $2s c_{N,s}\kappa_s=d_{N,s}$. Their explicit
value can be seen  for instance in \cite{brcdps}. Thanks to the choice of the constant $\kappa_s$, the $s$-harmonic extension operator
\begin{equation*}
\begin{split}
E_s: \dot H^s(\mathbb{R}^N)&\to X^s(\mathbb{R}^{N+1}_+)\\
u&\mapsto w=E_s[u]
\end{split}
\end{equation*}
is an isometry between the spaces $\dot H^s(\mathbb{R}^N)$ and $X^s(\mathbb{R}^{N+1}_+)$, i.e., $\displaystyle \|\varphi \|_{\dot H^s}= \| E_s[\varphi]\|_{X^s}$ for all $\varphi\in \dot H^s(\mathbb{R}^N)$. Moreover, there exists a constant $C=C(N,s)>0$ such the following trace inequality holds true (cf. \cite{brcdps}),
\begin{equation*}
\| w(\cdot, 0)\|_{L^{2_s^*}}\le C\|w\|_{X^s}\, , \quad\forall\, w\in X^s(\mathbb{R}^{N+1}_+).
\end{equation*}
%Let us set $\mathbb{R}^{N+1}_+=\{ (x,y)\in \mathbb{R}^{N+1}\,:\, x\in\mathbb{R}^N,\, y>0\}$ and define
%\begin{equation*}
%\frac{\partial w}{\partial \nu^s}:=
% -\kappa_s\lim_{y\to 0^+}y^{1-2s}\frac{\partial w}{\partial y}\,,
%\end{equation*}
%then, using the extension technique, system \eqref{eq:sistema-fractional} can be reformulated as
%\begin{equation}\label{eq:sistema-fractional-r2+}\left\{
%\begin{array}{rl}
%-\div(y^{1-2s}\nabla w_1) =0\mkern+242mu& \quad \mbox{in } \mathbb{R}^{N+1}_+ ,\\[4pt]
%-\div(y^{1-2s}\nabla w_2) =0\mkern+242mu& \quad \mbox{in } \mathbb{R}^{N+1}_+ ,\\[3pt]
%\dfrac{\partial w_1}{\partial \nu^s} +\lambda_1 w_1 =\mu_1 |w_1|^{2p-2}w_1+\beta |w_2|^p|w_1|^{p-2}w_2&\quad \mbox{on }\mathbb{R}^N\times\{y=0\} ,\\[5pt]
%\dfrac{\partial w_2}{\partial \nu^s} +\lambda_2 w_2 =\mu_2 |w_2|^{2p-2}w_2+\beta |w_1|^p|w_2|^{p-2}w_2&\quad \mbox{on }\mathbb{R}^N\times\{y=0\}.
%\end{array}\right.
%\end{equation}
Along the work we will use the following notation:
 \begin{itemize}
\item $E:=W^{s,2}(\mathbb{R}^N)$, denotes the fractional Sobolev space endowed with scalar product and norm
\begin{equation*}
(u\mid v)_j=\int_{\mathbb{R}^N} [ (-\Delta)^{\frac{s}{2}} u (-\Delta)^{\frac{s}{2}}  v + \lambda_j uv] dx,\quad \|u\|_j^2=(u\mid u)_j,\:\: j=1,2.
\end{equation*}
\item $\mathbb{E}:=E\times E$; the elements in $\mathbb{E}$ will be denoted by $\textbf{u}=(u_1,u_2)$; as a norm in $\mathbb{E}$ we will take $\|\textbf{u}\|_{\mathbb{E}}^2=\|u_1\|_1^2+\|u_2\|_2^2$.
\item we set $\textbf{0}=(0,0)$;
\item  for $\textbf{u}\in \mathbb{E}$ the notation $\textbf{u}\geq \textbf{0}$ (resp. $\textbf{u}>\textbf{0}$) means that $u_j\geq 0$, (resp. $u_j>0$), for all $j=1,2$.
\end{itemize}
For $u\in E$, (resp. $\textbf{u}\in \mathbb{E}$),  we set
\begin{equation*}
\begin{split}
I_j(u)&= \frac{1}{2} \int_{\mathbb{R}^N} (|(-\Delta)^{\frac{s}{2}}  u|^2+\lambda_j u^2)dx -\frac{1}{2p}\,\mu_j \int_{\mathbb{R}^N}  |u|^{2p}dx,\\
F(\textbf{u})&= F(u_1,u_2)= \frac{1}{2p}\, \int_{\mathbb{R}^N}  \left(\mu_1 |u_1|^{2p} +\mu_2 |u_2|^{2p}\right)dx\\
G(\textbf{u})&= G(u_1,u_2)= \frac{1}{p} \int_{\mathbb{R}^N}  |u_1|^p|u_2|^p dx,\\
\Phi({\bf u}) &=\Phi(u_1,u_2)= I_1(u_1)+I_2(u_2) - \beta\, G(u_1,u_2)\\
&= \frac 12 \|\textbf{u} \|_{\mathbb{E}}^2- F(\textbf{u}) -\beta\,G(\textbf{u}).
\end{split}
\end{equation*}

\

We observe that $F$ and $G$ are well defined because $\frac{p-1}{2p}N<s<1$ guarantees $2p<2_s^*$ which in turn implies the continuous Sobolev embedding $E\hookrightarrow L^{2p}(\mathbb{R}^N)$. Moreover, note that, since $p\geq 2$, the energy functional $\Phi$ is $\mathcal{C}^2$.

Any critical point $\textbf{u}\in \mathbb{E}$ of $\Phi$ gives rise to a solution of system \eqref{eq:sistema-fractional}. If $\textbf{u}\neq \textbf{0}$ we say that such a critical point is non-trivial. We also say that a solution $\textbf{u}$ of \eqref{eq:sistema-fractional} is {\it positive} if $\textbf{u}>\textbf{0}$.

\begin{Definition}\label{def:sol}
We say that ${\bf u}=(u_1,u_2)\in\mathbb{E}$ is a {\it bound state} to \eqref{eq:sistema-fractional} iff it is a critical point of $\Phi$, i.e.,
\begin{equation*}
\begin{split}
(u_1\mid \varphi_1)_1+(u_2\mid \varphi_2)_2 =&  \int_{\mathbb{R}^N} \left(\mu_1|u_1|^{2p-2}u_1\varphi_1
+\mu_2|u_2|^{2p-2}u_2\varphi_2\right)dx\\
&+\beta \int_{\mathbb{R}^N}  \left(|u_1|^{p-2}u_1\varphi_1 |u_2|^{p}+ |u_2|^{p-2}u_2\varphi_2 |u_1|^{p}\right)dx,
\end{split}
\end{equation*}
for any  $(\varphi_1,\varphi_2)\in \mathbb{E}$.
\end{Definition}

Among non-trivial solutions of \eqref{eq:sistema-fractional}, we shall distinguish the {\it bound states} from the {\it ground states}.

\begin{Definition}\label{def:ac}
A positive bound state ${\bf u}>{\bf 0}$  such that its energy is minimal among {\it all the non-trivial bound states}, namely
\begin{equation}\label{eq:gr}
\Phi(\bf{u})=\min\{\Phi({\bf v}): {\bf v}\in \mathbb{E}\setminus\{{\bf 0}\},\; \Phi'({\bf v})=0\},
\end{equation}
is called a {\it ground state}  of \eqref{eq:sistema-fractional}.
\end{Definition}
A relevant fact of the ground states is that they are candidates to be orbitally stable for the corresponding evolution equation, see for instance \cite{cz,stuart}. Nowadays it is well known that a ground state could be also non-negative and nontrivial, i.e., with some of their components zero and at least one positive component. Here we are interested in positive bound and ground states, in the sense of Definitions \ref{def:sol} and \ref{def:ac}.

%%%%%%%%%%%%%%%%%%%%%%%%%%%%%%%%%%%%%%%%%%%%%%%%%%%%%%%
%%%%%%%%%%%%%%%%%%%%%%%%%%%%%%%%%%%%%%%%%%%%%%%%%%%%%%%
\section{The Nehari manifold and preliminary results}\label{sec:kr}
In order to find critical points of $\Phi$ it is convenient to introduce the corresponding Nehari manifold. To do so, let us set
\begin{equation*}
\Psi({\bf u}) = (\Phi'({\bf u})\mid {\bf u})=\|{\bf u}\|_{\mathbb{E}}^2 -2p\, F({\bf u}) -2p\beta\, G({\bf u}),
\end{equation*}
then, we define the Nehari manifold as
\begin{equation*}
\mathcal{M} =  \{ {\bf u}\in \mathbb{E}_{rad}\setminus\{{\bf 0}\}: \Psi ({\bf u})=0\},
\end{equation*}
where the subindex $rad$ means radial.

\

Plainly, $\mathcal{M}$ contains all the non-trivial critical points of $\Phi$ on $\mathbb{E}_{rad}$. On the other hand, for any ${\bf v}\in \mathbb{E}_{rad}\setminus \{{\bf 0}\}$, one has that
\begin{equation}\label{eq:tm}
t{\bf v} \in \mathcal{M}\quad \Longleftrightarrow\quad
t^2\|{\bf v}\|_{\mathbb{e}}^2= t^{2p}\left[ 2pF({\bf v})+2p\beta G({\bf v})\right].
\end{equation}
As a consequence, for all ${\bf v}\in \mathbb{E}_{rad}\setminus \{{\bf 0}\}$, there exists
a unique $t>0$ such that $t{\bf v} \in \mathcal{M}$.
Moreover, since $F,G$ are homogeneous with degree $2p$,
that $\exists\,\rho>0$ such that
\begin{equation}\label{eq:M1}
\|{\bf u}\|_{\mathbb{E}}^2\geq \rho,\qquad \forall\, {\bf u}\in \mathcal{M}.
\end{equation}
Furthermore, from \eqref{eq:M1} it follows that
 \begin{equation}\label{eq:M2}
 (\Psi'({\bf u})\mid {\bf u})=(2-2p)\|{\bf u}\|_\mathbb{E}^2\le 2(1-p)\rho< 0,\qquad \forall\,{\bf u}\in \mathcal{M}.
\end{equation}
From \eqref{eq:M1} and \eqref{eq:M2} it follows that the Nehari manifold $\mathcal{M}$ is a smooth complete manifold of codimension $1$ in $\mathbb{E}_{rad}$.  Moreover, if
 ${\bf u}\in \mathcal{M}$ is a critical point of $\Phi$ constrained to $\mathcal{M}$, then
 there exists $\omega\in \mathbb{R}$ such that
\begin{equation*}
 \Phi'({\bf u})= \omega \Psi'({\bf u}).
\end{equation*}
Hence one finds $\Psi({\bf u})=( \Phi'({\bf u})\mid {\bf u})=\omega (\Psi'({\bf u})\mid {\bf u})$. Since
 $\Psi({\bf u})=0$, while by \eqref{eq:M2} $(\Psi'({\bf u})\mid {\bf u})<0$, we infer that
 $\omega=0$ and thus  $ \Phi'({\bf u})= 0$. In conclusion, we can state the following Proposition.
 \begin{Proposition}\label{pr:ac}
 We have that ${\bf u} \in \mathbb{E}$ is a non-trivial critical point of $\Phi$ if and only if
 ${\bf u}\in \mathcal{M}$ and is a critical point of $\Phi$ constrained to $\mathcal{M}$.
 \end{Proposition}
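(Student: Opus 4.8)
The plan is to establish the two implications separately, using that $\mathcal{M}$ is the regular level set $\{\Psi=0\}$ and that $\Psi$ is nondegenerate along it. The forward implication is essentially immediate and is the content of the sentence preceding the statement: if ${\bf u}\in\mathbb{E}_{rad}\setminus\{{\bf 0}\}$ satisfies $\Phi'({\bf u})=0$, then pairing with ${\bf u}$ gives $\Psi({\bf u})=(\Phi'({\bf u})\mid{\bf u})=0$, so ${\bf u}\in\mathcal{M}$; and since $\Phi'({\bf u})$ vanishes on all of $\mathbb{E}_{rad}$ it vanishes a fortiori on the tangent space $T_{\bf u}\mathcal{M}$, so ${\bf u}$ is a critical point of the restriction $\Phi|_{\mathcal{M}}$.

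For the converse I would invoke the Lagrange multiplier rule. Since \eqref{eq:M1}--\eqref{eq:M2} guarantee that $\mathcal{M}$ is a smooth codimension-one manifold cut out as a regular level set of $\Psi$ (note that \eqref{eq:M2} already forces $\Psi'({\bf u})\neq0$ on $\mathcal{M}$), a critical point ${\bf u}\in\mathcal{M}$ of $\Phi$ constrained to $\mathcal{M}$ must satisfy
\[
\Phi'({\bf u})=\omega\,\Psi'({\bf u})
\]
for some multiplier $\omega\in\mathbb{R}$. Testing this identity against ${\bf u}$ itself and recalling the definition $\Psi({\bf u})=(\Phi'({\bf u})\mid{\bf u})$ yields $\Psi({\bf u})=\omega\,(\Psi'({\bf u})\mid{\bf u})$.

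The crux is then to force $\omega=0$. Because ${\bf u}\in\mathcal{M}$ we have $\Psi({\bf u})=0$, so the last identity reduces to $0=\omega\,(\Psi'({\bf u})\mid{\bf u})$; the decisive input is the strict estimate \eqref{eq:M2}, which gives $(\Psi'({\bf u})\mid{\bf u})=(2-2p)\|{\bf u}\|_{\mathbb{E}}^2\le 2(1-p)\rho<0$ and in particular is nonzero. Hence $\omega=0$ and therefore $\Phi'({\bf u})=0$, i.e.\ ${\bf u}$ is a genuine non-trivial critical point of $\Phi$. I expect no serious obstacle here: the only place the argument could break is if $\mathcal{M}$ accumulated at ${\bf 0}$ (making the pairing degenerate), but the uniform lower bound \eqref{eq:M1} together with $p\ge 2$ rules this out, and this is precisely why the Nehari manifold serves as a natural constraint.
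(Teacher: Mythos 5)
Your proposal is correct and follows essentially the same route as the paper: the forward implication is the immediate observation that $\Psi({\bf u})=(\Phi'({\bf u})\mid{\bf u})=0$, and the converse is the Lagrange multiplier argument in which $\Phi'({\bf u})=\omega\,\Psi'({\bf u})$ is tested against ${\bf u}$ and \eqref{eq:M2} forces $\omega=0$. There is no difference of substance between your argument and the one in the paper.
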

 \begin{Remark}\label{rem:morse}
 The Morse index of a critical point  of $\Phi$ is the maximal dimension of the subspace where $\Phi''$ is negatively defined. From \eqref{eq:M2} we deduce that, if ${\bf u}\in\mathcal{M}$, then $\Phi''({\bf u})[{\bf u}]<0$, so that the Morse index of a critical point ${\bf u}_0$ of $\Phi$ is equal to its Morse index as critical point of $\Phi$ constrained to $\mathcal{M}$ plus 1.
 \end{Remark}

 Because of Proposition \ref{pr:ac}, $\mathcal{M}$ is called a {\it natural constraint} for $\Phi$.
 The key point for working on the Nehari manifold is that
 $\Phi$ is bounded from below on $\mathcal{M}$ and, hence, one can try to minimize $\Phi$ on $\mathcal{M}$. Actually, from $\Psi({\bf u})=0$ and the definition of $\mathcal{M}$, it follows that
\begin{equation}\label{eq:F}
\|{\bf u} \|_\mathbb{E}^2= 2p F({\bf u}) +2p\beta G({\bf u}).
\end{equation}
Substituting into $\Phi$ we get
\begin{equation}\label{eq:M3}
\Phi({\bf u})=\frac{p-1}{2p} \|{\bf u} \|_\mathbb{E}^2,\qquad \forall\,{\bf u}\in \mathcal{M},
\end{equation}
or, equivalently,
\begin{equation}\label{eq:M4}
\Phi({\bf u})=(p-1) \left[ F({\bf u}) +\beta G({\bf u})\right],\qquad \forall\,{\bf u}\in \mathcal{M}.
\end{equation}
Then \eqref{eq:M3} jointly with \eqref{eq:M1} imply that there exists a constant $C>0$ such that
\begin{equation}\label{eq:bound}
\Phi({\bf u})\geq C>0,\qquad  \forall\,{\bf u}\in \mathcal{M}.
\end{equation}
Concerning the Palais-Smale (PS) condition, we remember that, in the one dimensional case, $N=1$, we cannot expect a compact embedding of $E$ into $L^q(\mathbb{R})$ for any $1< q<2_s^*$. Indeed, in the even case it is also not true.
Nevertheless, we will prove (see Proposition \ref{prop:acex} below) that for a given PS sequence we can find a subsequence for which the weak limit is a bound state. On the other hand, for the radial case, due to the compact embeddings, for $1<N\leq3$ the PS condition follows by a standard argument as the following Lemma shows.

\begin{Lemma}\label{lem:psN}
Assume $1<N\leq3$. Then $\Phi$ satisfies the (PS) condition on $\mathcal{M}$, namely,
\begin{center}
(PS) every ${\bf u}_n\in\mathcal{M}$ such that $\Phi({\bf u}_n)\to c$ and $\nabla_{\mathcal{M}}\Phi({\bf u}_n)\to 0$ has a strongly convergent subsequence: $\exists\, {\bf u}_0\in\mathcal{M}$ such that ${\bf u}_n\to{\bf u}_0$.
\end{center}
\end{Lemma}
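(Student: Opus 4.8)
The plan is to establish the standard Palais--Smale compactness for $\Phi$ restricted to $\mathcal{M}$, exploiting the compact radial embedding available for $1<N\le 3$. Let $\mathbf{u}_n\in\mathcal{M}$ satisfy $\Phi(\mathbf{u}_n)\to c$ and $\nabla_{\mathcal{M}}\Phi(\mathbf{u}_n)\to 0$. The \emph{first step} is boundedness of the sequence: since $\mathbf{u}_n\in\mathcal{M}$, the identity \eqref{eq:M3} gives $\Phi(\mathbf{u}_n)=\tfrac{p-1}{2p}\|\mathbf{u}_n\|_{\mathbb{E}}^2$, so $\Phi(\mathbf{u}_n)\to c$ immediately yields $\|\mathbf{u}_n\|_{\mathbb{E}}^2\to \tfrac{2p}{p-1}c$; in particular $\{\mathbf{u}_n\}$ is bounded in $\mathbb{E}_{rad}$. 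This is the feature that makes the Nehari-manifold formulation so convenient: boundedness is automatic from the energy level, with no need to combine $\Phi$ and its derivative.

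The \emph{second step} is to pass to a weakly convergent subsequence $\mathbf{u}_n\rightharpoonup \mathbf{u}_0$ in $\mathbb{E}_{rad}$. By the compact embedding of the radial subspace $E_{rad}=W^{s,2}_{rad}(\mathbb{R}^N)\hookrightarrow L^{2p}(\mathbb{R}^N)$ (valid for $1<N\le 3$ because $2p<2_s^*$ strictly, which is exactly what the hypothesis $\tfrac{p-1}{2p}N<s<1$ guarantees), we obtain strong convergence $u_{j,n}\to u_{j,0}$ in $L^{2p}$ for $j=1,2$, after passing to a further subsequence. Consequently the nonlinear terms converge: $F(\mathbf{u}_n)\to F(\mathbf{u}_0)$ and, using H\"older with the exponent pair $(2,2)$ on $|u_{1,n}|^p|u_{2,n}|^p$, also $G(\mathbf{u}_n)\to G(\mathbf{u}_0)$; similarly the derivatives of these functionals converge strongly in the dual $\mathbb{E}'$.

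The \emph{third step} is to convert the constrained condition $\nabla_{\mathcal{M}}\Phi(\mathbf{u}_n)\to 0$ into a free one. By Proposition~\ref{pr:ac} and the Lagrange-multiplier structure, there exist $\omega_n\in\mathbb{R}$ with $\Phi'(\mathbf{u}_n)=\omega_n\Psi'(\mathbf{u}_n)+o(1)$ in $\mathbb{E}'$. Testing against $\mathbf{u}_n$ and using $\Psi(\mathbf{u}_n)=0$ together with the uniform bound $(\Psi'(\mathbf{u}_n)\mid \mathbf{u}_n)=(2-2p)\|\mathbf{u}_n\|_{\mathbb{E}}^2\le 2(1-p)\rho<0$ from \eqref{eq:M2}, one finds $\omega_n\to 0$ (the left side $(\Phi'(\mathbf{u}_n)\mid\mathbf{u}_n)=\Psi(\mathbf{u}_n)=0$ forces $\omega_n\cdot(\text{bounded away from }0)\to 0$). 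Hence $\Phi'(\mathbf{u}_n)\to 0$ strongly in $\mathbb{E}'$. Writing $\Phi'(\mathbf{u}_n)=\mathbf{u}_n-F'(\mathbf{u}_n)-\beta G'(\mathbf{u}_n)$ under the Riesz identification (so that the linear part is precisely the duality map of the Hilbert norm $\|\cdot\|_{\mathbb{E}}$), we get $\mathbf{u}_n=F'(\mathbf{u}_n)+\beta G'(\mathbf{u}_n)+o(1)$. Since the right-hand side converges strongly in $\mathbb{E}$ by the second step, $\mathbf{u}_n$ converges strongly to some limit, which must coincide with the weak limit $\mathbf{u}_0$; passing to the limit in $\Psi(\mathbf{u}_n)=0$ shows $\Psi(\mathbf{u}_0)=0$, and $\|\mathbf{u}_0\|_{\mathbb{E}}^2=\tfrac{2p}{p-1}c>0$ by \eqref{eq:M1}, so $\mathbf{u}_0\in\mathcal{M}$, completing the proof.

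The \emph{main obstacle} is the passage $\omega_n\to 0$ in the third step: one must justify that the Lagrange multipliers are controlled, which relies crucially on the uniform nondegeneracy \eqref{eq:M2} keeping $(\Psi'(\mathbf{u}_n)\mid\mathbf{u}_n)$ bounded away from zero. Everything else is the routine weak-to-strong upgrade that compactness of the radial embedding makes available; the restriction $N>1$ is essential precisely because, as noted before the statement, the embedding $E\hookrightarrow L^{2p}$ fails to be compact when $N=1$ even on the even subspace, so this argument does not extend to that case.
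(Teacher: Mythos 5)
Your proposal is correct and follows essentially the same route as the paper's proof: boundedness from \eqref{eq:M3}, weak convergence plus the compact radial embedding into $L^{2p}$ for $N=2,3$, the Lagrange multiplier argument with \eqref{eq:M2} forcing $\omega_n\to 0$ and hence $\Phi'({\bf u}_n)\to 0$, and finally the weak-to-strong upgrade via the compactness (weak continuity) of $F'$ and $G'$ to land back in $\mathcal{M}$. The only cosmetic difference is that you verify ${\bf u}_0\neq{\bf 0}$ at the end from $\|{\bf u}_n\|^2_{\mathbb{E}}\ge\rho$, whereas the paper establishes it earlier from \eqref{eq:M1} and \eqref{eq:F}; both are valid.
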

\begin{proof}
Let ${\bf u}_n\in\mathcal{M}$ be a sequence such that $\Phi({\bf u}_n)\to c$. Note that, from \eqref{eq:M1} and \eqref{eq:M3}, we have $c>0$. Moreover, $\Phi({\bf u}_n)\to c$ jointly with \eqref{eq:M3} also implies that ${\bf u}_n$ is bounded and, up to a subsequence, we can assume ${\bf u}_n\rightharpoonup {\bf u}_0$. Since for $N=2,3$ the space $E_{rad}$, is compactly embedded into $L^{2p}(\mathbb{R}^N)$. Thus, we deduce that $F({\bf u}_n)+\beta G({\bf u}_n)\to F({\bf u}_0)+\beta G({\bf u}_0)$. Moreover, by \eqref{eq:M1} and \eqref{eq:F}, there exists $c_0>0$ such that $F({\bf u}_0)+\beta G({\bf u}_0)\geq c_0$ and, hence, ${\bf u}_0\neq {\bf 0}$. Let $\nabla_{\mathcal{M}}\Phi({\bf u})=\Phi'({\bf u})-\omega\Psi({\bf u})$ be the constrained gradient of $\Phi$ to $\mathcal{M}$ and assume that $\nabla_{\mathcal{M}}\Phi({\bf u}_n)\to0$. Taking the dual product with ${\bf u}_n$ and recalling that $( \Phi'({\bf u}_n)\mid {\bf u}_n)=\Psi({\bf u}_n)=0$, we find that $\omega_n( \Psi'({\bf u}_n)\mid {\bf u}_n)\to0$ that, jointly with \eqref{eq:M2}, implies $\omega_n\to0$. Since, in addition, $\|\Psi'({\bf u}_n)\|\leq c_1<+\infty$ we deduce that $\Phi'({\bf u}_n)\to 0$. We conclude by proving that ${\bf u}_n\to{\bf u}_0$ strongly. Indeed, from $\Phi'({\bf u}_n)={\bf u}_n-(F'({\bf u}_n)+\beta G'({\bf u}_n))$, $\Psi({\bf u}_n)=2{\bf u}_n-2p(F'({\bf u}_n)+\beta G'({\bf u}_n))$ and $\Phi'({\bf u}_n)-\omega_n\Psi'({\bf u}_n)=o(1)$ we get
\begin{equation*}
(1-2\omega_n){\bf u}_n=(1-2p\omega_n)(F'({\bf u}_n)+\beta G'({\bf u}_n))+o(1).
\end{equation*}
Next, observe that $F'$ and $G'$ are compact operators since they are the gradients of smooth weakly continuous functionals $F$ and $G$, respectively. This, the preceding equation and $\omega_n\to0$ imply that ${\bf u}_n\to F'({\bf u}_0)+\beta G'({\bf u}_0)$ strongly. Then,  ${\bf u}_0 = F'({\bf u}_0) + \beta G'({\bf u}_0)$, so that $\|{\bf u}_0\|_{\mathbb{E}}^2 = 2p(F ({\bf u}_0) + \beta G({\bf u}_0))$, and we conclude that ${\bf u}_0\in\mathcal{M}$.
\end{proof}

\

It has been proved that the problem
\begin{equation}\label{eq:soliton-alpha}
(-\Delta)^s u +u= |u|^{\alpha}u \quad\mbox{in }\mathbb{R}^N,\quad u\in E, \quad u\not\equiv 0,
\end{equation}
has a unique radial and positive solution (cf. \cite{fl,fls}) for $0<\alpha<\frac{4s}{N-2s}$. On the other hand, it is clear that, for every $\beta\in \mathbb{R}$, \eqref{eq:sistema-fractional} already has
two semi-trivial positive solutions,
\begin{equation*}
{\bf u}_1 = (U_1,0),\quad {\bf u}_2=(0,U_2),
\end{equation*}
 where $U_j$ is the unique radial positive solution of
 \begin{equation}\label{eq:uncoupled}
 (-\Delta)^s u+\lambda_ju=\mu_j |u|^{2p-2}u.
 \end{equation}
Therefore, if $U$ denotes the unique positive radial solution of \eqref{eq:soliton-alpha}, and we set
\begin{equation}\label{eq:soluncoupled}
U_j(x)=\left(\frac{\lambda_j}{\mu_j}\right)^{\frac{1}{2p-2}}\,U(\lambda_j^{\frac{1}{2s}}\,x),\quad j=1,2,
\end{equation}
it follows that $U_j$ are solutions of \eqref{eq:uncoupled}. Then, to find a non-trivial existence result, one has to find solutions having {\it both the components} not identically zero.

\

We are ready to show that there exist non-negative solutions of  \eqref{eq:sistema-fractional}  different from
${\bf u}_j$, $j=1,2$. First, we define
\begin{equation*}
\gamma_1^2 =  \inf_{\varphi\in E\setminus\{0\}} \frac{ \|\varphi\|_2^2 }{\displaystyle\int_{\mathbb{R}^N}U_1^2\varphi^2dx},\qquad
\gamma_2^2 =  \inf_{\varphi\in E\setminus\{0\}} \frac{\|\varphi\|_1^2}{\displaystyle\int_{\mathbb{R}^N} U_2^2\varphi^2dx},
\end{equation*}
and
\begin{equation*}
\Lambda = \min \{\gamma_1^2,\gamma_2^2\},\qquad \Lambda'=\max \{\gamma_1^2,\gamma_2^2\}.
\end{equation*}

\noindent As next Proposition shows, if $p=2$, the semi-trivial solutions ${\bf u}_j$, $j=1,2$, are minimums (resp. saddle points) of $\Phi$ constrained to $\mathcal{M}$, provided $\beta<\Lambda$ (resp. $\beta>\Lambda'$). On the other hand, for $p>2$, the semi-trivial solutions ${\bf u}_j$, $j=1,2$, are both minimums of $\Phi$ constrained to $\mathcal{M}$ for any $\beta\in\mathbb{R}$.
\begin{Proposition}\label{pr:ac1}The following holds:
\begin{enumerate}
\item If $p=2$, then
\begin{itemize}
\item[$(i)$] for any $\beta<\Lambda$, the semi-trivial solutions ${\bf u}_j$, $j=1,2$, are strict local minima of $\Phi$ constrained to $\mathcal{M}$.
\item[$(ii)$] for any $\beta>\Lambda'$, the semi-trivial solutions ${\bf u}_j$, $j=1,2$, are saddle points of $\Phi$ constrained to $\mathcal{M}$. In particular, we have  $\inf_{\mathcal{M}}\Phi < \min\{\Phi({\bf u}_1),\Phi({\bf u}_2)\}$.
\end{itemize}
\item If $p>2$, for any $\beta\in\mathbb{R}$, the semi-trivial solutions ${\bf u}_j$, $j=1,2$, are strict local minima of $\Phi$ constrained to $\mathcal{M}$.
\end{enumerate}
\end{Proposition}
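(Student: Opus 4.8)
The plan is to reduce the whole statement to a single computation of the second differential $\Phi''(\mathbf{u}_j)$ at the semi-trivial points, and then to read off whether $\mathbf{u}_j$ is a constrained minimum or a saddle from its \emph{unconstrained} Morse index via Remark \ref{rem:morse}. That remark tells us that the Morse index of $\mathbf{u}_j$ as a critical point of $\Phi$ equals its Morse index on $\mathcal{M}$ plus one; hence $\mathbf{u}_j$ is a strict local minimum on $\mathcal{M}$ precisely when $\Phi''(\mathbf{u}_j)$ has Morse index $1$, and it is a saddle point when that index is $\geq 2$. So the entire proof is an exercise in counting negative directions of $\Phi''(\mathbf{u}_j)$.

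First I would compute $\Phi''(\mathbf{u}_1)[\mathbf{v},\mathbf{v}]$ for $\mathbf{u}_1=(U_1,0)$ and $\mathbf{v}=(v_1,v_2)$. Because $u_2=0$, every second-order term of $F$ and of $G$ carrying a factor $|u_2|^{p-2}u_2$ (in particular the mixed $v_1v_2$ term of $G''$) drops out, and the quadratic form decouples:
\[
\Phi''(\mathbf{u}_1)[\mathbf{v},\mathbf{v}] = I_1''(U_1)[v_1,v_1] + Q(v_2),
\]
where $I_1''(U_1)[v_1,v_1]=\|v_1\|_1^2-(2p-1)\mu_1\int_{\mathbb{R}^N}U_1^{2p-2}v_1^2\,dx$. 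For $p>2$ all coupling contributions to the $v_2$-block carry a positive power of $|u_2|$ and vanish, so $Q(v_2)=\|v_2\|_2^2$; for $p=2$ the term $(p-1)|u_1|^p|u_2|^{p-2}v_2^2$ survives (since $|u_2|^{p-2}=1$) and gives $Q(v_2)=\|v_2\|_2^2-\beta\int_{\mathbb{R}^N}U_1^2 v_2^2\,dx$. The first block is independent of $\beta$: as $U_1$ is the unique positive radial solution of \eqref{eq:uncoupled}, it is the ground state of $I_1$ on $E_{rad}$, so $I_1''(U_1)$ has Morse index exactly one, with $U_1$ itself a negative direction because $I_1''(U_1)[U_1,U_1]=(2-2p)\mu_1\int U_1^{2p}\,dx<0$, and trivial radial kernel.

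The three assertions then follow by inspecting $Q$. When $p>2$, $Q(v_2)=\|v_2\|_2^2$ is coercive for every $\beta$, so $\Phi''(\mathbf{u}_1)=\mathrm{Id}-(\text{compact})$ has Morse index one and trivial kernel; by Remark \ref{rem:morse} its restriction to $T_{\mathbf{u}_1}\mathcal{M}$ is positive definite and coercive, giving a strict local minimum and proving (2). For $p=2$ and $\beta<\Lambda\le\gamma_1^2$, the defining bound $\|v_2\|_2^2\ge\gamma_1^2\int U_1^2 v_2^2\,dx$ yields $Q(v_2)\ge(\gamma_1^2-\beta)\int U_1^2 v_2^2\,dx>0$ for $v_2\neq0$, so again $\Phi''(\mathbf{u}_1)$ has Morse index one and $\mathbf{u}_1$ is a strict local minimum, proving (1)$(i)$. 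For $p=2$ and $\beta>\Lambda'\ge\gamma_1^2$, the definition of $\gamma_1^2$ as an infimum produces some $\varphi\in E$ with $\|\varphi\|_2^2<\beta\int U_1^2\varphi^2\,dx$, hence $Q(\varphi)<0$; thus $\Phi''(\mathbf{u}_1)$ is negative on the two independent directions $(U_1,0)$ and $(0,\varphi)$, its Morse index is $\ge2$, the constrained index is $\ge1$, and $\mathbf{u}_1$ is a saddle point. The identical computation with the indices swapped treats $\mathbf{u}_2$ through $\gamma_2^2$, which is exactly why $\Lambda=\min\{\gamma_1^2,\gamma_2^2\}$ governs $(1)(i)$ and $\Lambda'=\max\{\gamma_1^2,\gamma_2^2\}$ governs $(1)(ii)$.

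For the final ``in particular'' in $(1)(ii)$, I would verify that the negative direction $(0,\varphi)$ is tangent to $\mathcal{M}$ at $\mathbf{u}_1$: a direct evaluation gives $(\Psi'(\mathbf{u}_1)\mid(0,\varphi))=0$, since the metric term $(U_1\mid 0)_1+(0\mid\varphi)_2$ vanishes and both $F'(\mathbf{u}_1)[(0,\varphi)]$ and $G'(\mathbf{u}_1)[(0,\varphi)]$ carry a factor $|u_2|^{2p-2}u_2$ or $|u_2|^{p-2}u_2$ that is zero at $\mathbf{u}_1$. Moving along a curve in $\mathcal{M}$ through $\mathbf{u}_1$ with this tangent therefore strictly lowers the energy, so $\inf_{\mathcal{M}}\Phi<\Phi(\mathbf{u}_1)$, and symmetrically $<\Phi(\mathbf{u}_2)$, yielding the strict inequality. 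The one genuinely non-elementary step, which I would either isolate as a short lemma or import from the nondegeneracy theory of the fractional ground state (cf. \cite{fl,fls}), is the claim that $I_1''(U_1)$ has Morse index exactly one and no radial kernel; granting this, everything else is the decoupling computation above combined with the Morse-index bookkeeping of Remark \ref{rem:morse}.
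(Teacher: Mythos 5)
Your Hessian computations (the decoupled form at $\mathbf{u}_1$, the block $Q$, the sign of $I_1''(U_1)[U_1]^2$) and your saddle-case argument all match the paper's proof; the gap is in how you pass from Morse-index bookkeeping to the conclusion ``strict local minimum of $\Phi$ constrained to $\mathcal{M}$''. Remark \ref{rem:morse} only counts indices: unconstrained index $1$ gives constrained index $0$, i.e.\ $\Phi''(\mathbf{u}_1)[\mathbf{h}]^2\geq 0$ on $T_{\mathbf{u}_1}\mathcal{M}$, and in infinite dimensions neither positive semi-definiteness nor even strict positive definiteness of the constrained Hessian implies a strict local minimum; one needs a coercive bound $\Phi''(\mathbf{u}_1)[\mathbf{h}]^2\geq c\|\mathbf{h}\|_{\mathbb{E}}^2$ for $\mathbf{h}\in T_{\mathbf{u}_1}\mathcal{M}$. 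Your route to coercivity (``$\mathrm{Id}-\text{compact}$, index one, trivial kernel'') does not close as written, because triviality of the kernel of the \emph{full} Hessian does not automatically pass to the \emph{restricted} form: a tangent vector $\mathbf{h}\neq\mathbf{0}$ could satisfy $\Phi''(\mathbf{u}_1)[\mathbf{h},\mathbf{k}]=0$ for all $\mathbf{k}\in T_{\mathbf{u}_1}\mathcal{M}$ while $\Phi''(\mathbf{u}_1)\mathbf{h}$ is a nonzero multiple of the normal direction. What rescues the argument (and what in fact underlies Remark \ref{rem:morse} itself) is the orthogonality relation $\Phi''(\mathbf{u}_1)[\mathbf{u}_1,\mathbf{h}]=0$ for every $\mathbf{h}\in T_{\mathbf{u}_1}\mathcal{M}$: by the equation for $U_1$, tangency is equivalent to $\int_{\mathbb{R}^N} U_1^{2p-1}h_1\,dx=0$, which makes the splitting $\mathbb{E}_{rad}=\mathbb{R}\mathbf{u}_1\oplus T_{\mathbf{u}_1}\mathcal{M}$ a $\Phi''(\mathbf{u}_1)$-orthogonal one, so that a kernel vector of the restriction would be a kernel vector of the full form. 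You never establish this identity, and without it the index/kernel transfer on which your whole reduction rests is unjustified.

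Two further points. First, a quantitative slip: for $p=2$, $\beta<\Lambda$, your bound $Q(v_2)\geq(\gamma_1^2-\beta)\int_{\mathbb{R}^N}U_1^2v_2^2\,dx>0$ gives positivity but not coercivity, since $\int U_1^2v_2^2\,dx$ is not comparable to $\|v_2\|_2^2$ ($U_1$ decays at infinity); the bound you need, and the one the paper uses, is $Q(v_2)\geq\bigl(1-\beta/\gamma_1^2\bigr)\|v_2\|_2^2$ when $\beta\geq0$ (and trivially $Q(v_2)\geq\|v_2\|_2^2$ when $\beta<0$). Second, note that the paper avoids the whole detour: using \eqref{eq:tang} it evaluates $\Phi''(\mathbf{u}_1)$ \emph{only} on tangent vectors, where the form reads $I_1''(U_1)[h_1]^2+Q(h_2)$ with $h_1\in T_{U_1}\mathcal{N}_1$, and then invokes the coercivity $I_1''(U_1)[h_1]^2\geq c_1\|h_1\|_1^2$ on $T_{U_1}\mathcal{N}_1$ — the same nondegeneracy input from \cite{fl,fls} that you isolate as your key lemma — plus the coercive bound on $Q$, obtaining directly $\Phi''(\mathbf{u}_1)[\mathbf{h}]^2\geq c\|\mathbf{h}\|_{\mathbb{E}}^2$ and hence strict local minimality, with no Morse-index argument at all. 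Your treatment of the saddle case (1)(ii), including the verification that $(0,\varphi)$ is tangent and the deduction $\inf_{\mathcal{M}}\Phi<\min\{\Phi(\mathbf{u}_1),\Phi(\mathbf{u}_2)\}$, is correct and coincides with the paper's.
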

The proof of Proposition \ref{pr:ac1} consists on the evaluation of the Morse index of ${\bf u}_j$, as critical points of $\Phi$ constrained to $\mathcal{M}$. To that end we observe that if $D^2\Phi_{\mathcal{M}}$ denotes the second derivative of $\Phi$ constrained to $\mathcal{M}$, since $\Phi'({\bf u}_j)=0$ then
$D^2\Phi_{\mathcal{M}} ({\bf u}_j)[{\bf h}]^2=\Phi'' ({\bf u}_j)[{\bf h}]^2$ for any ${\bf h}\in T_{{\bf u}_j}\mathcal{M}$. We define the Nehari manifolds associated to $I_j$.
\begin{equation*}
\mathcal{N}_j =\big\{u\in E_{rad} : (I_j'(u)|u)_j=0\big\}=\left\{u\in E_{rad} : \|u\|_j^2 -\mu_j \int_{\mathbb{R}^N} |u|^{2p}=0\right\}.
\end{equation*}
Since $I'_j(U_j)=0$, then $D^2 (I_j)_{\mathcal{N}_j} (U_j)[h]^2=I_j'' ({\bf u}_j)[h]^2$ for any $h\in T_{U_j}\mathcal{N}_j$. Moreover it is easy to check that, for $j=1,\,2$, we have
\begin{equation}\label{eq:tang}
{\bf h}\in T_{{\bf u}_j}\mathcal{M}\quad\text{iff}\quad h_j\in T_{U_j}\mathcal{N}_j.
\end{equation}
\begin{proof}[Proof of Proposition \ref{pr:ac1}]
First let us note that, for ${\bf h}\in\mathcal{M}$, we have
\begin{equation*}
\begin{split}
\Phi''({\bf u})[{\bf h}]^2=&\, I_1''(u_1)[ h_1]^2+I_2''(u_2)[h_2]^2\\
&-\beta p(p-1)\int_{\mathbb{R}^N}|u_1|^{p-2}|u_2|^ph_1^2dx-\beta p(p-1)\int_{\mathbb{R}^N}|u_1|^p|u_2|^{p-2}h_2^2dx\\
&-2\beta p^2\int_{\mathbb{R}^N}|u_1|^{p-2}u_1|u_2|^{p-2}u_2h_1h_2dx
\end{split}
\end{equation*}
Next, we distinguish between the cases $p=2$ and $p>2$. In the first case, $p=2$, we have the following.

\begin{itemize}
\item[$(i)$] Let us assume $\beta<\Lambda$. For any ${\bf h}=(h_1,h_2)\in T_{{\bf u}_1}\mathcal{M}$ it follows that
\begin{equation*}
\Phi''({\bf u}_1)[{\bf h} ]^2 =I_1''(U_1)[ h_1]^2 + \| h_2\|_2^2 -2\beta\int U_1^2h_2^2.
\end{equation*}
Since $U_1$ is a minimum of $I_1$ on $\mathcal{N}_1$, there exists $c_1>0$ such that
\begin{equation*}
I_1''(U_1)[h]^2 \geq c_1 \|h\|_1^2,\qquad \forall \,h\in T_{U_1} \mathcal{N}_1.
\end{equation*}
Then, taking ${\bf h}\in T_{{\bf u}_1}\mathcal{M}$, i.e., $h_1\in T_{U_1}\mathcal{N}_1$,  we get
\begin{equation*}
\Phi''({\bf u}_1)[{\bf h}]^2\geq c_1 \|h_1\|_1^2+\|h_2\|_2^2-\beta \int U_1^2h_2^2
\geq c_1 \|h_1\|_1^2+\left(1-\frac{\beta}{\gamma_1^2}\right)\|h_2\|_2^2.
\end{equation*}
Therefore, since $\beta<\gamma_1^2$ there exists $c_2>0$ such that
\begin{equation*}
\Phi''({\bf u}_1)[{\bf h}]^2\geq c_1 \|h_1\|_1^2+c_2\|h_2\|_2^2>0\quad\forall\,{\bf h}\in T_{{\bf u}_1}\mathcal{M}.
\end{equation*}
Similarly,  since $\beta<\gamma_2^2$,  there exists $c'_i>0$ such that
\begin{equation*}
\Phi''({\bf u}_2)[{\bf h}]^2\geq c'_1 \|h_1\|_1^2+c'_2\|h_2\|_2^2>0\quad\forall\,{\bf h}\in T_{{\bf u}_2}\mathcal{M}.
\end{equation*}
Then, we deduce that both ${\bf u}_1$ and ${\bf u}_2$ are local strict minimums of $\Phi$ on $\mathcal{M}$.

\item[$(ii)$] Let us assume $\beta>\Lambda'$. To prove the result we will evaluate $\Phi''({\bf u}_1)$ on tangent vectors of the form ${\bf h}=(0,h_2)$. First, note that
\begin{equation*}
\Phi''({\bf u}_1)[(0,h_2)]^2 = \|h_2\|_2^2 -\beta \int_{\mathbb{R}^N} U_1^2h_2^2dx.
\end{equation*}
Moreover, by \eqref{eq:tang}, we have that $(0,h_2)\in T_{{\bf u}_1}\mathcal{M}$ for all $h_2\in E_{rad}$. On the other hand, since $\beta>\gamma_1^2$ then there exists $\widetilde{h}_2\in E_{rad}\setminus\{0\}$ such that
\begin{equation*}
\gamma_1^2 < \frac{\|\widetilde{h}_2\|_2^2}{\displaystyle\int_{\mathbb{R}^N} U_1^2\widetilde{h}_2^2dx}<\beta,
\end{equation*}
and, hence,
\begin{equation*}
\Phi''({\bf u}_1)[(0,\widetilde{h}_2)]^2 = \|\widetilde{h}_2\|_2^2 -\beta \int_{\mathbb{R}^N} U_1^2\widetilde{h}_2^2dx<0.
\end{equation*}
Similarly, since $\beta>\gamma_2^2$, there exist $\widetilde{h}_1$ such that $\Phi''({\bf u}_2)[(\widetilde{h}_1,0)]^2<0$.
\end{itemize}
The case $p>2$ follows easily since, in this case, we have that
\begin{itemize}
\item for any ${\bf h}=(h_1,h_2)\in T_{{\bf u}_1}\mathcal{M}$,
\begin{equation*}
\Phi''({\bf u}_1)[{\bf h} ]^2 = I_1''(U_1)[h_1]^2 + \| h_2\|_2^2\geq c_1 \|h_1\|_1^2+\|h_2\|_2^2\geq c\|{\bf h}\|^2>0.
\end{equation*}

\item for any ${\bf h}=(h_1,h_2)\in T_{{\bf u}_2}\mathcal{M}$
\begin{equation*}
\Phi''({\bf u}_2)[{\bf h} ]^2 = \| h_1\|_1^2+I_2''(U_2)[h_2]^2 \geq  \|h_1\|_1^2+ c_2\|h_2\|_2^2\geq c\|{\bf h}\|^2>0.
\end{equation*}
\end{itemize}
\end{proof}

\begin{Remark}
Taking in mind Remark \ref{rem:morse}, by Proposition \ref{pr:ac1} we have that, if $p=2$ and $\beta<\Lambda$ or $p>2$ and $\beta\in\mathbb{R}$, the semi-trivial solutions $u_j$, $j = 1, 2$ have Morse index 1, while for $p=2$ and $\beta>\Lambda$ they have Morse index greater than 1.
\end{Remark}

We finish this section recalling a measure theoretic lemma that will be useful in the sequel (cf. \cite{Lions} or \cite[Lemma 3.6]{col} for a proof).
%According to the notation of Section \ref{sec:not-prel}, let us denote $E(A)=W^{s,2} (A)$ for any measurable set $A\subset\mathbb{R}$.
\begin{Lemma}\label{lem:measure}
For any $2<q<2_s^*$, there exists a positive constant $C=C(q)$ such that
\begin{equation}\label{eq:measure}
\int_{\mathbb{R}} |u|^q\le C\left( \sup_{z\in\mathbb{R}}\int_{|x-z|\le 1}|u(x)|^2dx\right)^{\frac{q-2}{2}}
\| u\|^2_{E},\quad \forall\: u\in E.
\end{equation}
\end{Lemma}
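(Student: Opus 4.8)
The plan is to prove \eqref{eq:measure} by a Lions-type localization-and-summation scheme: reduce the global estimate to a uniform local interpolation inequality on intervals of unit length, and then recombine the local pieces through a covering of $\mathbb{R}$ with bounded overlap. First I would fix a covering $\{Q_i\}_{i\in\mathbb{Z}}$ of $\mathbb{R}$ by the unit balls $Q_i=\{|x-z_i|\le 1\}$ centered at a lattice $z_i=i$, arranged so that every point of $\mathbb{R}$ belongs to at most a fixed finite number $m$ of the $Q_i$.

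The core of the argument is a local inequality on a single ball $Q=Q_i$: for $2<q<2_s^*$,
$$\int_{Q}|u|^q\,dx\le C\Big(\int_{Q}|u|^2\,dx\Big)^{\frac{q-2}{2}}\,\|u\|_{W^{s,2}(Q)}^2.$$
To obtain it I would interpolate the $L^q$ norm between $L^2$ and $L^{2_s^*}$ by Hölder's inequality, writing $\tfrac1q=\tfrac{1-\theta}{2}+\tfrac{\theta}{2_s^*}$, and then control the top endpoint by the local fractional Sobolev embedding $W^{s,2}(Q)\hookrightarrow L^{2_s^*}(Q)$, whose constant is independent of $i$ by translation invariance. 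The interpolation parameter $\theta$ is chosen so that the Sobolev norm enters to the power $2$ (that is, $\theta q$ is matched to $2$), and the trivial bound $\|u\|_{L^2(Q)}\le\|u\|_{W^{s,2}(Q)}$ is used to absorb the leftover powers of the local mass; this is exactly the step that fixes the exponent $\tfrac{q-2}{2}$ on $\int_Q|u|^2$.

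Next I would sum the local inequality over $i$. Since $\big(\int_{Q_i}|u|^2\big)^{\frac{q-2}{2}}\le\big(\sup_z\int_{|x-z|\le1}|u|^2\big)^{\frac{q-2}{2}}$ uniformly in $i$, pulling this factor out of the sum leaves $\sum_i\|u\|_{W^{s,2}(Q_i)}^2$; combined with the subadditivity $\int_{\mathbb{R}}|u|^q\le\sum_i\int_{Q_i}|u|^q$ and the overlap estimate $\sum_i\|u\|_{W^{s,2}(Q_i)}^2\le C\,\|u\|_E^2$, this yields \eqref{eq:measure}.

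The hard part will be precisely the localization of the nonlocal Gagliardo seminorm in this last step: unlike a local gradient energy, the double integral $\iint_{Q_i\times Q_i}\frac{|u(x)-u(y)|^2}{|x-y|^{1+2s}}\,dx\,dy$ does not split additively over the cover, so I must dominate each such piece by $\iint_{Q_i\times\mathbb{R}}\frac{|u(x)-u(y)|^2}{|x-y|^{1+2s}}\,dx\,dy$ and then use the bounded overlap in the $x$-variable to recover $\sum_i[u]_{W^{s,2}(Q_i)}^2\le m\,[u]_{\dot H^s(\mathbb{R})}^2$. Keeping the interpolation constant uniform in the exponent $\theta$ (hence in $q$ on compact subranges), so that the final $C=C(q)$ is genuinely finite, is the accompanying technical point I would need to check carefully.
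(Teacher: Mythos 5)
The first thing to say is that the paper contains no proof of this lemma: it is quoted with a pointer to \cite{Lions} and to \cite[Lemma 3.6]{col}, so your proposal can only be measured against the standard Lions-type argument in those sources. Your skeleton (unit covering with bounded overlap, local H\"older-plus-Sobolev estimate, summation) is exactly that argument, and the step you single out as the hard part is in fact harmless: bounding $[u]_{W^{s,2}(Q_i)}^2$ by $\iint_{Q_i\times\mathbb{R}}|u(x)-u(y)|^2|x-y|^{-1-2s}\,dx\,dy$ and using the bounded overlap in the $x$-variable does give $\sum_i\|u\|^2_{W^{s,2}(Q_i)}\le C\|u\|_E^2$, as you describe.

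The genuine gap is the sentence ``the interpolation parameter $\theta$ is chosen so that the Sobolev norm enters to the power $2$''. You cannot choose $\theta$: once the H\"older endpoints are $L^2(Q)$ and $L^{2_s^*}(Q)$, the relation $\frac1q=\frac{1-\theta}{2}+\frac{\theta}{2_s^*}$ forces $\theta q=\frac{2_s^*(q-2)}{2_s^*-2}$, and this equals $2$ only for the single exponent $\bar q=4\bigl(1-\frac{1}{2_s^*}\bigr)$, which for $N=1$, $s<\frac12$ equals $2+4s$. Your absorption device $\|u\|_{L^2(Q)}\le\|u\|_{W^{s,2}(Q)}$ transfers exponents in one direction only, so it repairs the mismatch precisely when $\theta q\le 2$, i.e.\ $2<q\le 2+4s$; for $2+4s<q<2_s^*$ (a nonempty range, since $2+4s<2_s^*$ always) the excess lands on the Sobolev factor and cannot be sent back. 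Moreover this is not a defect of bookkeeping that a cleverer splitting could fix: in that range the local inequality you aim at is false, and so is the global inequality \eqref{eq:measure} as stated. Indeed, take $u_\epsilon(x)=\phi(x/\epsilon)$ with $\phi$ a fixed bump supported in $(-1,1)$; then $\int_{\mathbb{R}}|u_\epsilon|^q=\epsilon\|\phi\|_{L^q}^q$, $\sup_z\int_{|x-z|\le1}u_\epsilon^2\,dx=\epsilon\|\phi\|^2_{L^2}$, and $\|u_\epsilon\|_E^2\sim\epsilon^{1-2s}$, so the ratio of the left to the right side of \eqref{eq:measure} behaves like $\epsilon^{2s-(q-2)/2}\to\infty$ as $\epsilon\to0$ whenever $q>2+4s$ (e.g.\ $s=\frac13$, $q=4<2_s^*=6$). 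Consequently the quantitative estimate can only hold for $2<q\le2+4s$, where your argument, with the absorption step made explicit, is correct. For the remaining exponents the true statement --- and all that the paper actually needs in the proof of Proposition \ref{prop:acex} --- is qualitative: one proves \eqref{eq:measure} for the single exponent $\bar q$ and then, for a bounded sequence in $E$ with vanishing local mass, deduces convergence to $0$ in every $L^q$ with $2<q<2_s^*$ by interpolating against the uniform $L^2$ and $L^{2_s^*}$ bounds. That two-step structure, as in \cite{Lions} and in Willem's book, is the ingredient missing from your proposal (and, strictly speaking, from the statement of the lemma itself).
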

%%%%%%%%%%%%%%%%%%%%%%%%%%%%%%%%%%%%%%%%%%%%%%%%%%%%%%%%%%%%%
%%%%%%%%%%%%%%%%%%%%%%%%%%%%%%%%%%%%%%%%%%%%%%%%%%%%%%%%%%%%%
\section{Existence Results}\label{sec:pf2}
Because of Proposition \ref{pr:ac}, in order to find a non-trivial solution of \eqref{eq:sistema-fractional} it is enough to find a critical point of $\Phi$ constrained to $\mathcal{M}$. This is accomplished using Proposition \ref{pr:ac1} and, due to the lack of a compact embedding in the one dimensional case, by proving that for a given PS sequence we can find a subsequence for which the weak limit is a bound state. For dimensions $N=2,3$ this is a direct consequence of Proposition \ref{pr:ac1} and Lemma \ref{lem:psN}.

\begin{Proposition}
\label{prop:acex} The following holds:
\begin{enumerate}
\item If $p=2$,

\begin{itemize}
\item[$(i)$] for any $\beta<\Lambda$, the functional $\Phi$ has a Mountain-Pass (MP) critical point  ${\bf u}^*$ on $\mathcal{M}$. Moreover, one has $\Phi({\bf u}^*)>\max\{\Phi({\bf u}_1) ,\Phi({\bf u}_2)\}$.

\item[$(ii)$] for any $\beta>\Lambda'$, the functional $\Phi$ has a global minimum $\widetilde{{\bf u}}$ on $\mathcal{M}$. Moreover, one has $\Phi(\widetilde{{\bf u}})<\min\{\Phi({\bf u}_1) ,\Phi({\bf u}_2)\}$.
\end{itemize}
\item If $p>2$, for any $\beta\in\mathbb{R}$ the functional $\Phi$ has a MP critical point ${\bf u}^{*}$ on $\mathcal{M}$. Moreover, one has $\Phi({\bf u}^{*})>\max\{\Phi({\bf u}_1) ,\Phi({\bf u}_2)\}$.
\end{enumerate}
\end{Proposition}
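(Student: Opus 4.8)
The plan is to invoke Proposition \ref{pr:ac}, which reduces the problem to finding critical points of the restriction $\Phi|_{\mathcal{M}}$, and then to read off the variational geometry near the semi-trivial solutions ${\bf u}_1,{\bf u}_2$ from Proposition \ref{pr:ac1}. The compactness needed to turn Palais--Smale sequences into critical points is supplied by Lemma \ref{lem:psN} when $N=2,3$; for $N=1$ it must be replaced by the weak-limit argument described at the end, which is the delicate point. Everywhere I use that, by \eqref{eq:M3}, any sequence in $\mathcal{M}$ of bounded energy is bounded in $\mathbb{E}$, and that $\Phi|_{\mathcal{M}}\ge C>0$ by \eqref{eq:bound}.

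I would treat case (1)(ii) ($p=2$, $\beta>\Lambda'$) by direct minimization. Since $\widetilde c:=\inf_{\mathcal{M}}\Phi$ is finite and positive and, by Proposition \ref{pr:ac1}(1)(ii), satisfies $\widetilde c<\min\{\Phi({\bf u}_1),\Phi({\bf u}_2)\}$, Ekeland's variational principle on the complete manifold $\mathcal{M}$ produces a minimizing sequence that is also a (PS) sequence at level $\widetilde c$. Applying the (PS) condition gives a minimizer $\widetilde{\bf u}\in\mathcal{M}$, which by Proposition \ref{pr:ac} is a critical point of $\Phi$; the strict inequality $\Phi(\widetilde{\bf u})=\widetilde c<\min\{\Phi({\bf u}_1),\Phi({\bf u}_2)\}$ forces it to differ from both semi-trivial solutions.

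For the mountain-pass cases (1)(i) and (2), Proposition \ref{pr:ac1} shows that ${\bf u}_1$ and ${\bf u}_2$ are two \emph{strict} local minima of $\Phi|_{\mathcal{M}}$; more precisely, its proof yields $\Phi''({\bf u}_j)[{\bf h}]^2\ge\nu_j\|{\bf h}\|^2$ on $T_{{\bf u}_j}\mathcal{M}$ for some $\nu_j>0$, hence a quadratic lower bound $\Phi\ge\Phi({\bf u}_j)+\delta_j$ on small spheres $\partial B_{r_j}({\bf u}_j)\cap\mathcal{M}$. Because every ${\bf v}\in\mathbb{E}_{rad}\setminus\{{\bf 0}\}$ has a unique positive projection onto $\mathcal{M}$ (see \eqref{eq:tm}), $\mathcal{M}$ is homeomorphic to the unit sphere of $\mathbb{E}_{rad}$ and thus path-connected, so the family $\Gamma$ of paths on $\mathcal{M}$ from ${\bf u}_1$ to ${\bf u}_2$ is nonempty; I set $c=\inf_{\gamma\in\Gamma}\max_t\Phi(\gamma(t))$. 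Each such path must cross both spheres, so $c\ge\Phi({\bf u}_j)+\delta_j$ for $j=1,2$ and therefore $c>\max\{\Phi({\bf u}_1),\Phi({\bf u}_2)\}$. The mountain-pass theorem on the complete $\mathcal{C}^2$ manifold $\mathcal{M}$, combined with the (PS) condition, then yields a critical point ${\bf u}^*$ at level $c$, giving simultaneously its existence and the estimate $\Phi({\bf u}^*)>\max\{\Phi({\bf u}_1),\Phi({\bf u}_2)\}$.

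The main obstacle is the compactness step when $N=1$, since $E_{rad}$ (even functions) does not embed compactly into $L^{2p}(\mathbb{R})$ and Lemma \ref{lem:psN} is unavailable. Here I would take a bounded (PS) sequence ${\bf u}_n$ at the relevant level and, repeating the algebra in the proof of Lemma \ref{lem:psN}, first show the Lagrange multipliers $\omega_n\to0$, so that $\Phi'({\bf u}_n)\to0$ in $\mathbb{E}'$; passing to a weak limit ${\bf u}_0$ and using the weak continuity of $F'$ and $G'$ gives $\Phi'({\bf u}_0)={\bf 0}$, i.e. ${\bf u}_0$ is a bound state. The crux is to prevent ${\bf u}_0={\bf 0}$: were the sequence to vanish, Lemma \ref{lem:measure} would force $\int_\mathbb{R}|u_{n,j}|^{2p}\to0$ and hence $F({\bf u}_n)+\beta G({\bf u}_n)\to0$, contradicting $\|{\bf u}_n\|_{\mathbb{E}}^2=2p\big(F({\bf u}_n)+\beta G({\bf u}_n)\big)\ge\rho$ from \eqref{eq:F}--\eqref{eq:M1}. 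Excluding vanishing in this way---and handling the even symmetry so that the surviving mass is retained in the weak limit and the limiting energy matched with the variational level---is the most technical part of the argument.
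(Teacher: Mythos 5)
Your overall strategy coincides with the paper's: the local geometry comes from Proposition \ref{pr:ac1}, the cases $N=2,3$ follow from the mountain-pass theorem (or direct minimization) combined with Lemma \ref{lem:psN}, and the only real difficulty is $N=1$. Your treatment of $N=2,3$, your sphere-crossing argument for $c>\max\{\Phi({\bf u}_1),\Phi({\bf u}_2)\}$, and your Ekeland-based minimization in case (1)(ii) are all fine and essentially what the paper does.

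However, in the case $N=1$ there is a genuine gap, and it sits exactly where the paper does its real work. Ruling out vanishing (which you do correctly via Lemma \ref{lem:measure} together with \eqref{eq:M1} and \eqref{eq:F}) does not prevent the weak limit ${\bf u}_0$ from being ${\bf 0}$: non-vanishing only says that a fixed amount of mass survives in \emph{some} unit interval centered at points $z_n$, and these points may escape to infinity (for even functions, two symmetric bumps drifting to $\pm\infty$), in which case ${\bf u}_n\rightharpoonup{\bf 0}$ anyway. Moreover, even if ${\bf u}_0\neq{\bf 0}$, you have no control on its energy level: without identifying $\Phi({\bf u}_0)$ with the mountain-pass level $c$, the weak limit could perfectly well be one of the semi-trivial critical points ${\bf u}_1$, ${\bf u}_2$, and then both the nontriviality of the two components and the claimed inequality $\Phi({\bf u}^*)>\max\{\Phi({\bf u}_1),\Phi({\bf u}_2)\}$ are lost. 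The paper closes this in two steps: (a) a translation argument, setting $\overline{{\bf v}}_k(x)={\bf v}_k(x+z_k)$, which is still a PS sequence by translation invariance of $\Phi$, with $z_k$ chosen bounded (this is where evenness enters) so that the translated sequence has a nontrivial weak limit ${\bf u}^*$ lying, after translating back, in $\mathcal{M}$; and (b) an identification of the level, $\Phi({\bf u}^*)=c$, obtained from \eqref{eq:M4} and Fatou-type estimates in both directions, which is what transfers the MP inequality $c>\max\{\Phi({\bf u}_1),\Phi({\bf u}_2)\}$ to ${\bf u}^*$. You name exactly this step (``handling the even symmetry so that the surviving mass is retained \dots and the limiting energy matched with the variational level'') but defer it; that deferred step is the heart of the proposition, not a technicality. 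A minor related point: your justification ``weak continuity of $F'$ and $G'$'' is not available for $N=1$ --- its failure is precisely why Lemma \ref{lem:psN} is restricted to $N=2,3$; the identity $\Phi'({\bf u}_0)={\bf 0}$ must instead be obtained by testing against compactly supported functions and using local compactness.
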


\begin{proof} To prove the result we distinguish between the case $N=1$, where no compact embedding holds, and the cases $N=2,3$ where we have a Sobolev compact embedding.

\
\begin{enumerate}
\item Let us start assuming $N=1$ and $p=2$.

\begin{itemize}
\item[$(i)$] By Proposition \ref{pr:ac1}--$\textit{(1)}$-$(i)$, the semi-trivial solutions ${\bf u}_j$, $j=1,\, 2$  are local minima of $\Phi$ constrained to $\mathcal{M}$. This fact allow us to apply the MP theorem (cf. \cite{ar}) to $\Phi$ on $\mathcal{M}$, yielding a (PS) sequence $\{{\bf v}_k\}\subset\mathcal{M}$ with $\Phi ({\bf v}_k)\to c$ and $\nabla_{\mathcal{M}}\Phi({\bf v}_k)\to 0$ as $k\to\infty$, where
\begin{equation*}
c=\inf\limits_{\gamma\in\Gamma}\max\limits_{t\in[0,1]}\Phi(\gamma(t)),
\end{equation*}
and $\Gamma=\{\gamma:[0,1]\to\mathcal{M}\ \text{continuous}\ |\ \gamma(0)={\bf u}_1,\ \gamma(1)={\bf u}_2\}$. Moreover, denoting by $\nabla_{\mathcal{M}}\Phi({\bf u})=\Phi'({\bf u})-\omega \Psi'({\bf u})$, with  $\omega\in \mathbb{R}$, the constrained gradient of $\Phi$ on $\mathcal{M}$, we can suppose that
 $\nabla_{\mathcal{M}}\Phi({\bf v}_k)\to 0$.
Note that by \eqref{eq:M3} one finds that $\{{\bf v}_k\}$ is a bounded sequence on $\mathbb{E}$ so that, up to a subsequence, we can assume that ${\bf v}_k\rightharpoonup {\bf u}^*$ weakly in $\mathbb{E}$, ${\bf v}_k\to {\bf v}^*$ strongly in $L^q_{loc}(\mathbb{R})\times L^q_{loc}(\mathbb{R})$ and a.e. in $\mathbb{R}$.
%Moreover, we {\it claim} that ${\bf v}^*\in\mathcal{M}$ and hence is a nontrivial bound state. To prove it, we argue as follows. Since  $\nabla_{\mathcal{M}}\Phi ({\bf v}_k)=\Phi' ({\bf v}_k)-\omega_k \Psi'({\bf v}_k)\to 0$.
%Taking the dual product with ${\bf v}_k$ and recalling that  $(\Phi'({\bf v}_k)\mid {\bf v}_k)=\Psi({\bf v}_k)=0$, we find that $\omega_k (\Psi'({\bf v}_k)\mid {\bf v}_k)\to 0$ and this
%jointly with \eqref{eq:M2} implies  that $\omega_k\to 0$. Since in addition $\|\Psi({\bf v}_k)\|\leq c_1<+\infty$,
Following the proof of Lemma \ref{lem:psN}, we deduce that $\Phi'({\bf v}_k)\to 0$.

Next, we define $\rho_k=(v_1)_k^2+(v_2)_k^2$ where ${\bf v}_k=((v_1)_k,(v_2)_k)$ and we prove that there exist $R,\, C>0$ such that
\begin{equation}\label{eq:vanish}
\sup_{z\in\mathbb{R}}\int_{|z-x|<R}\rho_k(x)dx\ge C>0,\quad\forall k\in\mathbb{N},
\end{equation}
that is, there is no evanescence effect. On the contrary, if
\begin{equation*}
\sup_{z\in\mathbb{R}}\int_{|z-x|<R}\rho_k(x)dx\to 0,
\end{equation*}
because of Lemma \ref{lem:measure}  we find that ${\bf v}_k\to {\bf 0}$ strongly in
$L^{2p}(\mathbb{R})\times L^{2p}(\mathbb{R})$, so the weak limit ${\bf v}^*\equiv {\bf 0}$. Since ${\bf v}_k\in\mathcal{M}$, by \eqref{eq:M3} we have
\begin{equation*}
c+o_k(1)=\Phi({\bf v}_k)=\frac{(p-1)}{2p}\|{\bf v}_k\|_{\mathbb{E}}^2,
\end{equation*}
with $o_k(1)\to0$ as $k\to\infty$. This a contradiction with the fact ${\bf v}^*={\bf 0}$, hence there is no evanescence and \eqref{eq:vanish} is proved. Next, we observe that we can find a bounded sequence
$\{z_k\}\subset\mathbb{R}$ such that the new sequence $\overline{\rho}_k(x)=\rho_k(x+z_k)$ verifies
\begin{equation*}
\liminf_{n\to\infty}\int_{|x|<R}\overline{\rho}_n(x)dx\ge \eta >0.
\end{equation*}
Taking into account that $\overline{\rho}_k\to \overline{\rho}$ strongly in $L_{loc}^1(\mathbb{R})$,
we conclude that $\overline{\rho}\not\equiv {\bf 0}$. Therefore, if we define $\overline{{\bf v}}_k(x)={\bf v}_k(x+z_k)$, the sequence $\overline{{\bf v}}_k$ is also a PS sequence for $\Phi$, which is not in $\mathcal{M}$ because it is radial but not with respect to the origin,  the weak limit of $\overline{{\bf v}}_k$, denoted by ${\bf u}^*$ is a critical point of $\Phi$ that, after a translation, is in $\mathcal{M}$, hence it is a critical point of $\Phi$ constrained to $\mathcal{M}$ so that ${\bf u}^*\in\mathcal{M}$. Hence, using \eqref{eq:M4}, and Fatou's Lemma, we get
\begin{equation*}
\begin{split}
\Phi ({\bf u}^*) = &\, (p-1)[ F({\bf u}^*)+\beta G({\bf u}^*)]\\
\ge&\limsup_{k\to\infty}\,(p-1)[ F(\overline{{\bf v}}_k)+\beta G(\overline{{\bf v}}_k)]\\
  = & \limsup_{k\to\infty}\Phi(\overline{{\bf v}}_k)= c.
 \end{split}
\end{equation*}
Similarly, substituting the $\limsup$ by $\liminf$ in the above expression we obtain the reverse inequality, and hence $\Phi({\bf u}^*)=c$. Now it is clear  that ${\bf u}^*$ is a  non-trivial radial
 {\it bound state} solution of \eqref{eq:sistema-fractional}, and moreover, by the MP theorem, we find that $\Phi({\bf u}^*)>\max\{\Phi({\bf u}_1) ,\Phi({\bf u}_2)\}$.
\item[$(ii)$] Arguing in a similar way as before, one proves that  $\inf_{\mathcal{M}}\Phi$ is achieved at some $\widetilde{{\bf u}}\in\mathcal{M}$. Moreover, if $\beta>\Lambda'$,  Proposition \ref{pr:ac1}--$\textit{(1)}$-$(ii)$ implies that $\Phi(\widetilde{{\bf u}})<\min\{\Phi({\bf u}_1) ,\Phi({\bf u}_2)\}$.
\end{itemize}
Since, by Proposition \ref{pr:ac1}-$\textit{(2)}$, the semi-trivial solutions ${\bf u}_j$, $j=1,2$, are strict local minima of $\Phi$ constrained to $\mathcal{M}$, the result for $N=1$ and $p>2$ follows as in the above case $(1)$-$(i)$.

\

\item To finish, we assume $N=2,3$. Then, if $p=2$,
\begin{itemize}
\item[$(i)$] by Lemma \ref{lem:psN} and Proposition \ref{pr:ac1}-$(i)$ we can apply the Mountain-Pass theorem to $\Phi$ on $\mathcal{M}$ yielding a critical point ${\bf u}^*$ of $\Phi$ constrained to $\mathcal{M}$. By the Mountain-Pass theorem it also follows that $\Phi({\bf u}^*)>\max\{\Phi({\bf u}_1) ,\Phi({\bf u}_2)\}$.

\item[$(ii)$] by Lemma \ref{lem:psN} the $\inf_{\mathcal{M}}\Phi$ is achieved at some $\widetilde{{\bf u}}\neq{\bf 0}$. Moreover, if $\beta>\Lambda'$, then Proposition \ref{pr:ac1}-$\textit{(1)}$-$(ii)$ implies that $\Phi(\widetilde{{\bf u}})<\min\{\Phi({\bf u}_1) ,\Phi({\bf u}_2)\}$.
\end{itemize}
Finally, if $p>2$, by Proposition \ref{pr:ac1}-$\textit{(2)}$, the semi-trivial solutions ${\bf u}_j$ $j=1,2$, are strict local minimums of $\Phi$ constrained to $\mathcal{M}$, and the result follows arguing as in the former case  $(2)$-$(i)$.
\end{enumerate}
\end{proof}

\subsection{Existence of ground states}\label{subsec:gr}

\

Concerning ground states, our main result is the following Theorem \ref{th:ac}. Roughly speaking, from Proposition \ref{prop:acex} --$\textit{(1)}$-$(ii)$, only for $p=2$ and $\beta>\Lambda'$ we deduce the existence of a MP critical point $\widetilde{{\bf u}}$ such that its energy $\Phi(\widetilde{{\bf u}})<\min\{\Phi({\bf u}_1) ,\Phi({\bf u}_2)\}$, which will lead to a ground state. In the remaining cases of Proposition \ref{prop:acex}, namely $p=2$ and $\beta<\Lambda$ or $p>2$ and $\beta\in\mathbb{R}$, the conclusion of Proposition \ref{prop:acex} provides us with MP critical points whose energy is strictly greater than $\max\{\Phi({\bf u}_1) ,\Phi({\bf u}_2)\}$, which will not lead a ground state in the sense of Definition \ref{def:ac}.
\begin{Theorem}\label{th:ac}
Assuming $p=2$ and $\beta>\Lambda'$ the system \eqref{eq:sistema-fractional}
has a positive radial ground state $\widetilde{{\bf u}}$.
\end{Theorem}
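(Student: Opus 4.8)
The plan is to identify the desired ground state with the global minimizer of $\Phi$ on the Nehari manifold $\mathcal{M}$ furnished by Proposition~\ref{prop:acex}. Indeed, for $p=2$ and $\beta>\Lambda'$ that proposition yields $\widetilde{\bf u}\in\mathcal{M}$ with $\Phi(\widetilde{\bf u})=\inf_{\mathcal{M}}\Phi$ and, crucially, with the strict energy gap $\Phi(\widetilde{\bf u})<\min\{\Phi({\bf u}_1),\Phi({\bf u}_2)\}$. By Proposition~\ref{pr:ac}, $\widetilde{\bf u}$ is a non-trivial critical point of $\Phi$ on $\mathbb{E}$, i.e. a bound state. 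It therefore remains to prove two things: that $\widetilde{\bf u}$ may be taken strictly positive, and that its energy is minimal among \emph{all} non-trivial bound states in the sense of Definition~\ref{def:ac} (not merely over the radial manifold). Observe that, since $\gamma_j^2>0$, we have $\beta>\Lambda'>0$; this sign condition will be used repeatedly.

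For positivity I would first replace $\widetilde{\bf u}=(\widetilde u_1,\widetilde u_2)$ by $(|\widetilde u_1|,|\widetilde u_2|)$. As the fractional seminorm does not increase under taking absolute values, while the $L^2$-norms and the functionals $F,G$ are unchanged, one gets $\Psi(|\widetilde{\bf u}|)\le 0$; projecting onto $\mathcal{M}$ via \eqref{eq:tm} with a factor $t\le1$ and using \eqref{eq:M3} gives $\Phi(t|\widetilde{\bf u}|)\le\Phi(\widetilde{\bf u})=\inf_{\mathcal{M}}\Phi$, forcing $t=1$, so $|\widetilde{\bf u}|$ is again a minimizer and we may assume $\widetilde{\bf u}\ge{\bf 0}$. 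Next, neither component can vanish: if, say, $\widetilde u_2\equiv0$, then $\widetilde u_1\in\mathcal{N}_1$ and $\Phi(\widetilde{\bf u})=I_1(\widetilde u_1)\ge\inf_{\mathcal{N}_1}I_1=I_1(U_1)=\Phi({\bf u}_1)$, contradicting the strict gap. Finally, each $\widetilde u_j\ge0$, $\widetilde u_j\not\equiv0$, solves $(-\Delta)^s\widetilde u_j+\lambda_j\widetilde u_j=\mu_j\widetilde u_j^{2p-1}+\beta\,\widetilde u_k^{\,p}\widetilde u_j^{\,p-1}\ge0$, so the strong maximum principle for $(-\Delta)^s+\lambda_j$ gives $\widetilde u_j>0$ in $\mathbb{R}^N$; hence $\widetilde{\bf u}>{\bf 0}$.

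The heart of the matter, and the step I expect to be the main obstacle, is showing that $\widetilde{\bf u}$ minimizes the energy over the whole set of bound states rather than only over $\mathcal{M}$. Let ${\bf v}=(v_1,v_2)$ be any non-trivial bound state; then $\Psi({\bf v})=0$, so by \eqref{eq:M3} $\Phi({\bf v})=\frac{p-1}{2p}\|{\bf v}\|_{\mathbb{E}}^2$. Introduce the Schwarz symmetrizations $v_j^*$ of $|v_j|$ and set ${\bf v}^*=(v_1^*,v_2^*)\in\mathbb{E}_{rad}$. The fractional Pólya--Szeg\H{o} inequality gives $\|{\bf v}^*\|_{\mathbb{E}}\le\|{\bf v}\|_{\mathbb{E}}$, rearrangement preserves $L^{2p}$-norms so $F({\bf v}^*)=F({\bf v})$, and the Hardy--Littlewood inequality yields $G({\bf v}^*)\ge G({\bf v})$. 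Since $\beta>0$, combining these with $\Psi({\bf v})=0$ produces
\begin{equation*}
\|{\bf v}^*\|_{\mathbb{E}}^2\le\|{\bf v}\|_{\mathbb{E}}^2=2pF({\bf v})+2p\beta G({\bf v})\le 2pF({\bf v}^*)+2p\beta G({\bf v}^*),
\end{equation*}
so $\Psi({\bf v}^*)\le0$. Projecting ${\bf v}^*$ onto $\mathcal{M}$ through \eqref{eq:tm} then yields a factor $t\le1$, and \eqref{eq:M3} gives $\Phi(t{\bf v}^*)=\frac{p-1}{2p}\,t^2\|{\bf v}^*\|_{\mathbb{E}}^2\le\frac{p-1}{2p}\|{\bf v}\|_{\mathbb{E}}^2=\Phi({\bf v})$. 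Since $t{\bf v}^*\in\mathcal{M}$ and $\widetilde{\bf u}$ minimizes $\Phi$ there, we conclude $\Phi(\widetilde{\bf u})\le\Phi(t{\bf v}^*)\le\Phi({\bf v})$. As ${\bf v}$ was an arbitrary non-trivial bound state, $\widetilde{\bf u}$ attains the minimum in \eqref{eq:gr}, and being a positive radial bound state it is the desired ground state. The delicate points to check are the positivity of $\gamma_j^2$ (so that $\beta>0$), the validity of the fractional Pólya--Szeg\H{o} and Hardy--Littlewood inequalities in this setting, and the well-definedness of the projection, i.e. $F({\bf v}^*)>0$, which holds because ${\bf v}\neq{\bf 0}$.
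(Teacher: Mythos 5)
Your proposal is correct and follows essentially the same route as the paper's proof: obtain the global minimizer on $\mathcal{M}$ from Proposition~\ref{prop:acex}--(1)-(ii), pass to absolute values (Stroock--Varopoulos), exclude semi-trivial limits via the strict energy gap and conclude positivity by the strong maximum principle, and finally compare against arbitrary bound states via Schwarz symmetrization (P\'olya--Szeg\H{o} plus Hardy--Littlewood, both valid here and cited in the paper) and projection onto $\mathcal{M}$ with factor $t\le 1$. The only cosmetic differences are that you rule out a vanishing component through $\inf_{\mathcal{N}_1}I_1=I_1(U_1)$ rather than the paper's direct appeal to uniqueness of the positive radial solution $U_1$, and that you make explicit the sign condition $\beta>\Lambda'>0$ and the $t\le 1$ projection argument, which the paper states more tersely.
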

\begin{proof}
Proposition \ref{prop:acex}--$\textit{(1)}$-$(ii)$ yields a critical point $\widetilde{{\bf u}}\in \mathcal{M}$
which is a non-trivial solution of \eqref{eq:sistema-fractional}.
To finish, we have to
show that $\widetilde{{\bf u}}>{\bf 0}$ and it is a ground state in the sense
of Definition \ref{def:ac}. To prove these facts, we argue as follows.
First we note that the thanks to the Stroock-Varopoulos inequality (cf. \cite{stroock,varopoulos}),
\begin{equation*}
\left\|(-\Delta)^s(|u|) \right\|_{L^2}\leq \left\|(-\Delta)^su \right\|_{L^2},
\end{equation*}
it is easy to show that 
\begin{equation}\label{eq:abs}
|\widetilde{{\bf u}}|=(|\widetilde{u}_1|,|\widetilde{u}_2|)\in\mathcal{M}\quad\text{and}\quad\Phi(|\widetilde{{\bf u}}|)=\Phi(\widetilde{{\bf u}})= \min \{\Phi({\bf u}):{\bf u}\in \mathcal{M}\},
\end{equation}
and, hence, we can assume that $\widetilde{{\bf u}}\geq{\bf 0}$. If one of the components of $\widetilde{{\bf u}}$ would be zero, say $\widetilde{u}_2=0$, then $\widetilde{u}_1\geq 0$, $\widetilde{u}_1\neq 0$ would be solution to the problem
\begin{equation*}
(-\Delta)^s u+\lambda_1u=\mu_1 u^{2p-1} \quad \mbox{in } \mathbb{R}^{N},
 \end{equation*}
As a consequence, $\widetilde{u}_1=U_1$ (see \eqref{eq:soluncoupled}) which is a contradiction with $\Phi(\widetilde{{\bf u}})<\Phi({\bf u}_1)$. Then both components of $\widetilde{{\bf u}}$ satisfy $\widetilde{u}_j\geq0$, $\widetilde{u}_j\neq0$, $j=1,2$. Moreover, $\widetilde{u}_j$ satisfies the problem
\begin{equation*}
(-\Delta)^s \widetilde{u}_j+\lambda_j\widetilde{u}_j=\mu_j \widetilde{u}_j^{2p-1} +\beta \widetilde{u}_i^p\widetilde{u}_j^{p-1}\quad \mbox{in } \mathbb{R}^{N},
\end{equation*}
with $i\neq j$,  $i,j=1,2$. Hence, by the strong maximum principle
we conclude that $\widetilde{u}_j>0$, $j=1,2$, in $\mathbb{R}^{N}$, that is $\widetilde{{\bf u}}>{\bf 0}$.
Now, it remains to prove that
\begin{equation}\label{eq:gs}
\Phi(\widetilde{{\bf u}})= \min \{\Phi({\bf u}):{\bf u}\in \mathbb{E}\setminus\{{\bf 0}\},\, \Phi'({\bf u})=0\}.
\end{equation}
By contradiction,
 let $\widetilde{{\bf v}}\in \mathbb{E}$ be a non-trivial critical point of $\Phi$ such that
\begin{equation}\label{eq:sym}
\Phi(\widetilde{{\bf v}})<\Phi(\widetilde{{\bf u}})= \min \{ \Phi({\bf u}):\: {\bf u}\in \mathcal{M}\}.
\end{equation}
Setting ${\bf u}_0 =|\widetilde{{\bf v}}|$ there holds
\begin{equation}\label{eq:sym1}
\Phi({\bf u}_0)=\Phi(\widetilde{{\bf v}}),\qquad \Psi({\bf u}_0)=\Psi(\widetilde{{\bf v}}).
\end{equation}
Denote by ${\bf u}_0^\star\in \mathbb{E}\setminus\{{\bf 0}\}$ the Schwartz symmetrization of ${\bf u}_0$. Therefore, by the properties of Schwartz symmetrization (cf. \cite{fs,fmm}), we have
\begin{equation}\label{eq:a1}
\| {\bf u}_0^{\star}\|^2_\mathbb{E}\le \|{\bf u}_0\|_\mathbb{E}^2,
\end{equation}
and
\begin{equation}\label{eq:a2}
F({\bf u}_0^\star)+\beta G({\bf u}_0^\star)\geq F({\bf u}_0)+\beta G({\bf u}_0).
\end{equation}
Thus $\Psi({\bf u}_0^{\star})\leq \Psi({\bf u}_0)$.
Using the second identity of \eqref{eq:sym1} and the fact that $\widetilde{{\bf v}}$ is a critical point
of $\Phi$, we get $\Psi({\bf u}_0)=\Psi(\widetilde{{\bf v}})=0$
and there exists a unique $t_0\in (0,1]$ such that $t_0\,{\bf u}_0^{\star}\in \mathcal{M}$. Indeed, taking in mind \eqref{eq:tm}, we have that such $t_0$ satisfies
\begin{equation*}
\|{\bf u}_0^{\star}\|_{\mathbb{E}}^2=4t_0^{2}[F({\bf u}_0^{\star})+\beta G({\bf u}_0^{\star})],
\end{equation*}
so that, in mind \eqref{eq:a1} and \eqref{eq:a2}, it follows that
\begin{equation*}
t_0^{2}=\frac{\|{\bf u}_0^{\star}\|_{\mathbb{E}}^2}{4(F({\bf u}_0^{\star})+\beta G({\bf u}_0^{\star}))}\leq \frac{\|{\bf u}_0\|_{\mathbb{E}}^2}{4(F({\bf u}_0)+\beta G({\bf u}_0))}=1,
\end{equation*}
since $\Phi'({\bf u}_0)=0$. Moreover,
\begin{equation}\label{eq:prev}
\Phi(t_0\,{\bf u}_0^{\star})= \frac{1}{4}\, t_0^2\|{\bf u}_0^\star\|_{\mathbb{E}}^2\leq \frac{1}{4}\, \|{\bf u}\|_{\mathbb{E}}^2=\Phi({\bf u}).
\end{equation}
Then, inequality \eqref{eq:prev}, the first identity of \eqref{eq:sym1} and \eqref{eq:sym} yield
\begin{equation*}
\Phi(t_0\,{\bf u}_0^{\star})\leq \Phi({\bf u}_0)=\Phi(\widetilde{{\bf v}})<\Phi(\widetilde{{\bf u}})= \min \{\Phi({\bf u}):{\bf u}\in \mathcal{M}\},
\end{equation*}
which is a contradiction, since $t_0\,{\bf u}_0^{\star}\in \mathcal{M}$.
This shows that \eqref{eq:gs} holds, and the proof is complete.
\end{proof}

\subsection{Existence of bound states}\label{subsec:bou}

\

Concerning the existence of bound states, our main result is the following Theorem \ref{th:acb}. As commented before, for $p=2$ and $\beta<\Lambda$ or $p>2$ and $\beta\in\mathbb{R}$, Proposition \ref{prop:acex} provides MP critical points whose energy is strictly greater than the quantity $\max\{\Phi({\bf u}_1) ,\Phi({\bf u}_2)\}$. This MP critical points, under the restriction $\beta>0$ (which arises as a natural condition in order to apply the strong maximum principle), will provide us with radial positive bound states.

\begin{Theorem}\label{th:acb} The following holds:
\begin{itemize}
\item[$(i)$] Assuming $p=2$ and $\beta<\Lambda$, the system \eqref{eq:sistema-fractional} has a radial bound state ${\bf u}^*$ such that ${\bf u}^*\not= {\bf u}_j$, $j=1,2$. Moreover, if  $0<\beta <\Lambda$, then ${\bf u}^*>0$.
\item[$(ii)$] Assuming $p>2$ and $\beta\in\mathbb{R}$, the system \eqref{eq:sistema-fractional} has a radial bound state ${\bf u}^{*}$ such that ${\bf u}^{*}\not= {\bf u}_j$, $j=1,2$. Moreover, if  $\beta>0$, then   ${\bf u}^{*}>0$.
\item[$(iii)$] If $p\geq 2$ and $\beta=\varepsilon b$ and $|\varepsilon|$ small enough, then system \eqref{eq:sistema-fractional} has a radial bound state ${\bf u}_\varepsilon^*$ such that ${\bf u}_\varepsilon^* \to {\bf z}:=(U_1,U_2)$ as $\varepsilon\to 0$. Moreover, if $\beta=\varepsilon b> 0$ then ${\bf u}_\varepsilon^*>{\bf 0}$.
\end{itemize}
\end{Theorem}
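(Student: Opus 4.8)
The plan is to read off parts $(i)$ and $(ii)$ from the mountain--pass critical points already constructed in Proposition \ref{prop:acex}, and to obtain part $(iii)$ by a perturbation (implicit function theorem) argument around the uncoupled solution ${\bf z}=(U_1,U_2)$. For $(i)$ I would invoke Proposition \ref{prop:acex}--$(1)$-$(i)$: for $p=2$ and $\beta<\Lambda$ it yields a critical point ${\bf u}^*$ of $\Phi$ constrained to $\mathcal{M}$ with $\Phi({\bf u}^*)>\max\{\Phi({\bf u}_1),\Phi({\bf u}_2)\}$, which by Proposition \ref{pr:ac} is a radial bound state of \eqref{eq:sistema-fractional}, and the strict energy inequality forces ${\bf u}^*\neq{\bf u}_j$, $j=1,2$. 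Part $(ii)$ is identical, using Proposition \ref{prop:acex}--$(2)$. Thus the existence assertions in $(i)$ and $(ii)$ need essentially no new work.

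The positivity under $\beta>0$ is where the nonlocal nature intervenes. Since $F$ and $G$ are unchanged when each component is replaced by its absolute value, and the Stroock--Varopoulos inequality $\|(-\Delta)^s|u|\|_{L^2}\le\|(-\Delta)^su\|_{L^2}$ gives $\||{\bf u}|\|_{\mathbb{E}}\le\|{\bf u}\|_{\mathbb{E}}$, for $\beta>0$ one has $\Phi(|{\bf u}|)\le\Phi({\bf u})$ and the projection of $|{\bf u}|$ onto $\mathcal{M}$ does not raise the energy (cf. \cite{stroock,varopoulos}). Hence the mountain--pass level can be computed along paths joining the nonnegative endpoints ${\bf u}_1,{\bf u}_2$ inside the cone $\{{\bf u}\ge{\bf 0}\}$, and the resulting critical point may be taken with ${\bf u}^*\ge{\bf 0}$. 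Neither component vanishes: if, say, $u_2^*\equiv0$, then $u_1^*\ge0$, $u_1^*\not\equiv0$, would solve the scalar equation \eqref{eq:uncoupled}, forcing $u_1^*=U_1$ by the uniqueness result of \cite{fl,fls} and thus ${\bf u}^*={\bf u}_1$, against $\Phi({\bf u}^*)>\Phi({\bf u}_1)$. With both components nonnegative and nontrivial, the strong maximum principle for $(-\Delta)^s+\lambda_j$, exactly as in the proof of Theorem \ref{th:ac}, yields ${\bf u}^*>{\bf 0}$.

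For part $(iii)$, fix $b$, write $\beta=\varepsilon b$, and regard $\Phi_\varepsilon:=I_1+I_2-\varepsilon b\,G$ as a $C^2$ functional on $\mathbb{E}_{rad}$ (recall $p\ge2$). At $\varepsilon=0$ the point ${\bf z}=(U_1,U_2)$ is a critical point of $\Phi_0=I_1+I_2$, and $\Phi_0''({\bf z})=\mathrm{diag}\big(I_1''(U_1),I_2''(U_2)\big)$. Each $I_j''(U_j)=\mathrm{Id}-K_j$ is a compact perturbation of the identity on $E_{rad}$, hence Fredholm of index $0$; on the whole space its kernel is spanned by the non-radial translations $\partial_{x_\ell}U_j$, so it is trivial on $E_{rad}$ by the nondegeneracy of the ground state \cite{fls}. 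Therefore $\Phi_0''({\bf z})$ is an isomorphism of $\mathbb{E}_{rad}$, and the implicit function theorem applied to $(\varepsilon,{\bf u})\mapsto\Phi_\varepsilon'({\bf u})$ produces a $C^1$ branch $\varepsilon\mapsto{\bf u}_\varepsilon^*$ of critical points with ${\bf u}_0^*={\bf z}$ and ${\bf u}_\varepsilon^*\to{\bf z}$ in $\mathbb{E}_{rad}$ as $\varepsilon\to0$; these are the sought radial bound states, genuinely coupled for small $\varepsilon$ since both components are close to $U_j\not\equiv0$. To get ${\bf u}_\varepsilon^*>{\bf 0}$ when $\beta=\varepsilon b>0$, I would test each component equation with the negative part $(u_{j,\varepsilon}^*)^-$; using the pointwise inequality $(a-b)(a^--b^-)\le-(a^--b^-)^2$, which gives $\langle(-\Delta)^su,u^-\rangle\le-\|u^-\|_{\dot H^s}^2$, together with the favourable signs of the $\mu_j$ and $\beta$ terms, one arrives at $\|(u_{j,\varepsilon}^*)^-\|_j^2\le C\|(u_{j,\varepsilon}^*)^-\|_j^{2p}+\varepsilon b\,C'\,\|u_{i,\varepsilon}^*\|_i^{p}\,\|(u_{j,\varepsilon}^*)^-\|_j^{p}$ (with $i\neq j$). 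Since ${\bf u}_\varepsilon^*\to(U_1,U_2)>{\bf 0}$ we have $\|(u_{j,\varepsilon}^*)^-\|_j\to0$, so for $|\varepsilon|$ small the right-hand side is strictly below the left unless $(u_{j,\varepsilon}^*)^-\equiv0$; hence $u_{j,\varepsilon}^*\ge0$, and the strong maximum principle gives ${\bf u}_\varepsilon^*>{\bf 0}$.

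The main obstacle is twofold. Analytically the crux of $(iii)$ is the invertibility of $\Phi_0''({\bf z})$ on the \emph{radial} space, i.e.\ the nondegeneracy of $U_j$ among radial functions, which is precisely what removes the translation kernel $\partial_{x_\ell}U_j$ and is supplied by \cite{fls}; without it the implicit function theorem cannot be applied. Throughout, the recurrent technical point is that in the nonlocal setting one cannot manipulate $|u|$ pointwise, so every positivity conclusion must be routed either through the Stroock--Varopoulos inequality (parts $(i)$-$(ii)$) or through the negative-part testing combined with the smallness of $\varepsilon$ (part $(iii)$).
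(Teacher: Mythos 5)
Your existence claims in $(i)$--$(ii)$ and your part $(iii)$ essentially follow the paper's route: existence comes straight from Proposition \ref{prop:acex} together with Proposition \ref{pr:ac}, and $(iii)$ is the paper's Theorem \ref{th:pert} argument (local inversion around ${\bf z}$, using the non-degeneracy of $U_j$ on $E_{rad}$ from \cite{fl,fls}, followed by a negative-part estimate). Your test-function inequality for the positivity in $(iii)$ is a workable variant of the paper's \eqref{eq:ineq2}--\eqref{eq:ineq6}; note only that your step ``$\,{\bf u}_\varepsilon^*\to(U_1,U_2)$ implies $\|(u_{j,\varepsilon}^*)^-\|_j\to 0\,$'' tacitly uses the continuity of $u\mapsto u^-$ on $W^{s,2}(\mathbb{R}^N)$, which is true for $0<s<1$ but not immediate in the nonlocal setting and should be justified (the paper's version sidesteps it by deriving the lower bound $\|u^-_{j\varepsilon}\|_j\ge\|U_j\|_j+o(1)$ and contradicting the energy identity).

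The genuine gap is the positivity claim in $(i)$--$(ii)$. You argue that since $\Phi(|{\bf u}|)\le\Phi({\bf u})$ for $\beta>0$ (Stroock--Varopoulos) and re-projection onto $\mathcal{M}$ does not raise energy, the mountain-pass level can be computed over paths in the cone $\{{\bf u}\ge{\bf 0}\}$, ``and the resulting critical point may be taken with ${\bf u}^*\ge{\bf 0}$.'' The first half is correct, but the conclusion does not follow: restricting the admissible paths changes nothing about what the mountain-pass theorem delivers, namely a PS sequence at level $c$ produced by a deformation whose flow does not preserve the cone, so its limit need not be nonnegative. The absolute-value trick works for \emph{minimizers} (as in Theorem \ref{th:ac}, where $|\widetilde{\bf u}|$ is again a minimizer, hence again a critical point by Proposition \ref{pr:ac}), but for a saddle point $|{\bf u}^*|$ is in general not a critical point. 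This is exactly why the paper introduces the truncated functional $\Phi^+({\bf u})=\tfrac12\|{\bf u}\|_{\mathbb{E}}^2-F({\bf u}^+)-\beta G({\bf u}^+)$ and its Nehari manifold $\mathcal{M}^+$: every critical point of $\Phi^+$ solves \eqref{eq:sistema-fractional+}, whose right-hand sides are nonnegative when $\beta>0$, so nonnegativity comes from the \emph{equation} (maximum principle), not from the variational construction. The real content of the paper's proof --- entirely missing from your proposal --- is showing that ${\bf u}_1,{\bf u}_2$ remain strict local minima of $\Phi^+$ on $\mathcal{M}^+$, which cannot be read off from second derivatives because $\Phi^+$ is not $\mathcal{C}^2$; the paper does this via the local homeomorphism ${\bf u}\mapsto t_{\bf u}{\bf u}$ between neighborhoods of ${\bf u}_j$ in $\mathcal{M}$ and in $\mathcal{M}^+$, together with the estimate $t_{\bf u}\ge 1$. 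Without this step (or a proved invariant-cone deformation argument for the fractional gradient flow), your chain ``nonnegative $\Rightarrow$ both components nontrivial $\Rightarrow$ strong maximum principle'' never gets started.
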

\begin{proof}
If $p=2$ and $\beta<\Lambda$, Proposition \ref{prop:acex}--$\textit{(1)}$-$(i)$ yields a non-trivial solution  ${\bf u}^*\in\mathcal{M}$ of  \eqref{eq:sistema-fractional}, which corresponds to a MP critical point of $\Phi$ on $\mathcal{M}$. Similarly, if $p>2$ and $\beta\in\mathbb{R}$, Proposition \ref{prop:acex}--$\textit{(2)}$ provides us with a non-trivial solution ${\bf u}^*\in \mathcal{M}$ of \eqref{eq:sistema-fractional}, which corresponds to a MP critical point of $\Phi$ on $\mathcal{M}$. Moreover,
 $\Phi({\bf u}^*)>\max\{\Phi({\bf u}_1),\Phi({\bf u}_2)\}$ implies that
 ${\bf u}^*\not= {\bf u}_j$, $j=1,2$. To show that  ${\bf u}^*>{\bf 0}$ we introduce the
 functional
\begin{equation*}
 \Phi^+({\bf u})= \frac{1}{2} \|{\bf u} \|_{\mathbb{E}}^2- F({\bf u}^+) -\beta\,G({\bf u}^+),
\end{equation*}
where ${\bf u}^+=(u_1^+,u_2^+)$ and $u^+=\max\{u,0\}$. We consider the
corresponding Nehari manifold
\begin{equation*}
\mathcal{M}^+=\{{\bf u}\in \mathbb{E}\setminus\{{\bf 0}\}:\: (\nabla \Phi^+({\bf u})\mid {\bf u})=0\}.
\end{equation*}
Repeating with minor changes the arguments carried out in Section \ref{sec:not-prel}, one readily shows that
what is proved in such a section, still holds with $\Phi$ and $\mathcal{M}$ substituted by $\Phi^+$ and $\mathcal{M}^+$. Moreover,
Proposition \ref{pr:ac} holds also true for $\Phi^+$ and $\mathcal{M}^+$.
On the other hand,  Proposition \ref{pr:ac1}--$\textit{(1)}$-$(i)$ cannot be proved as before, since  $\Phi^+$ is not $\mathcal{C}^2$. To solve this difficulty, we argue as follows.

We consider an $\varepsilon$-neighborhood $V_\varepsilon\subset \mathcal{M}$ of ${\bf u}_1$. For each ${\bf u}\in V_\varepsilon$
there exists $t_{{\bf u}}>0$ such that $t_{{\bf u}}{\bf u}\in \mathcal{M}^+$. Indeed $t_{{\bf u}}$ satisfies
\begin{equation*}
\|{\bf u}\|_{\mathbb{E}}^2 = 2p\,t_{{\bf u}}^{2p-2} \left[  F({\bf u}^+) +\beta\,G({\bf u}^+)\right],
\end{equation*}
 and, since $\|{\bf u}\|_{\mathbb{E}}^2 = 2p \left[  F({\bf u}) +\beta\,G({\bf u})\right]$, we get
\begin{equation}\label{eq:M+}
 \left[  F({\bf u}) +\beta\,G({\bf u})\right] =t_{{\bf u}}^{2p-2} \left[  F({\bf u}^+) +\beta\,G({\bf u}^+)\right].
\end{equation}
Let us point out that $F({\bf u}^+) +\beta\,G({\bf u}^+)\leq F({\bf u}) +\beta\,G({\bf u})$ and this implies that
$t_{{\bf u}}\geq 1$. Moreover, since
$\lim\limits_{{\bf u}\to{\bf u}_1}F({\bf u}^+) +\beta\,G({\bf u}^+)= F({\bf u}_1)+\beta\,G({\bf u}_1)>0$ it follows that there
exist $\varepsilon>0$ and $c>0$ such that
\begin{equation*}
F({\bf u}^+) +\beta\,G({\bf u}^+)\geq c,\qquad \forall \;{\bf u}\in V_\varepsilon.
\end{equation*}
This and \eqref{eq:M+} imply that the map ${\bf u}\mapsto t_{{\bf u}}{\bf u}$ is a homeomorphism, locally near
${\bf u}_1$. In particular, there are $\varepsilon$-neighborhoods $V_\varepsilon\subset\mathcal{M}$, $W_\varepsilon\subset \mathcal{M}^+$
of ${\bf u}_1$ such that for all ${\bf v}\in W_\varepsilon$, there exists
${\bf u}\in V_\varepsilon$ such that ${\bf v}=t_{{\bf u}}{\bf u}$.
Finally, from \eqref{eq:M3} we have $\Phi^+({\bf v})=\frac{p-1}{2p} \|{\bf v}\|_{\mathbb{E}}^2$ that, together with $t_{{\bf u}}\geq 1$, implies that
\begin{equation*}
\Phi^+({\bf v})=\frac{p-1}{2p} \|{\bf v}\|_{\mathbb{E}}^2=\frac{p-1}{2p} t_{{\bf u}}^2 \|{\bf u}\|^2\geq \frac{p-1}{2p} \|{\bf u}\|_{\mathbb{E}}^2=\Phi({\bf u}).
\end{equation*}
Since, according to Proposition \ref{pr:ac}, ${\bf u}_1$ is a local minimum of $\Phi$ on $\mathcal{M}$, it follows that
\begin{equation*}
\Phi^+({\bf v})\geq \Phi({\bf u})\geq \Phi({\bf u}_1)=\Phi^+({\bf u}_1),\quad \forall\,{\bf v}\in W_\varepsilon.
\end{equation*}
Then, ${\bf u}_1$ is a local strict minimum for $\Phi^+$ on $\mathcal{M}^+$. A similar proof
works for ${\bf u}_2$.

From the preceding arguments, it follows that $\Phi^+$ has a MP critical point ${\bf u}^*\in \mathcal{M}^+$, which gives rise to a solution of
\begin{equation}\label{eq:sistema-fractional+}
\left \{
\begin{array}{l}
(-\Delta)^s u_1+ \lambda_1 u_1= \mu_1 (u_1^+)^{2p-1}+\beta (u_2^+)^p(u_1^+)^{p-1}\quad\text{in }\mathbb{R}^N,\\[3pt]
(-\Delta)^s  u_2 + \lambda_2 u_2= \mu_2 (u_2^+)^{2p-1}+\beta (u_1^+)^p (u_2^+)^{p-1}\quad\text{in }\mathbb{R}^N.
\end{array} \right.
\end{equation}
In particular, if $p=2$ and $0<\beta<\Lambda$ or $p>2$ and $\beta>0$, one finds that $u_j\geq 0$. In addition, since ${\bf u}^*$ is a MP critical point, one
has that $\Phi^+({\bf u}^*)>\max\{\Phi({\bf u}_1) ,\Phi({\bf u}_2)\}$. Let us
also remark that ${\bf u}^*\in \mathcal{M}^+$ implies that ${\bf u}^*\neq {\bf 0}$ and hence $u_2^*\equiv 0$ implies that  $u_1^*\not\equiv 0$. Now we can argue as in the proof of Theorem \ref{th:ac}. From
$\Phi'(u_1^*,0)=0$ it follows that $u_1^*\in E$ is a non-trivial solution
of
\begin{equation*}
(-\Delta)^s u_1+ \lambda_1 u_1= \mu_1 (u_1^+)^{2p-1}\quad\text{in }\mathbb{R}^N.
\end{equation*}
Since $u_1^*\geq 0$ and $u_1^*\not\equiv 0$, then $u_1^*=U_1$, namely
${\bf u}^*=(U_1,0)={\bf u}_1$. This is in contradiction with $\Phi^+({\bf u}^*)>\Phi({\bf w}_1)$, proving that $u_2^+\not\equiv 0$.  A similar
argument proves that $u_1^*\not\equiv 0$. Since both $u_1^*\not\equiv0$ and $u_2^*\not\equiv0$, using the strong maximum principle we get $u_1^*,\, u_2^*>0$ in $\mathbb{R}^{N}$.

To prove item $(iii)$ we refer to the proof of Theorem \eqref{th:pert} where this result is proven for a general system of $m$-equations.
\end{proof}
\section{Some results for systems with more than $2$ equations}\label{sec:ext}
In this final section we extend Theorems \ref{th:ac} and \ref{th:acb} to systems with more than two equations following the scheme of \cite{ac2}. To simplify, we start showing the results concerning the system
\begin{equation}\label{eq:3eq}
\!\left \{\!
\begin{array}{l}
(-\Delta)^s u_1+ \lambda_1 u_1 = \mu_1 |u_1|^{2p-2}u_1+\beta_{12} |u_2|^{p} |u_1|^{p-2}u_1+\beta_{13} |u_3|^{p} |u_1|^{p-2}u_1,\\
(-\Delta)^s u_2+ \lambda_2u_2 = \mu_2 |u_2|^{2p-2}u_2+\beta_{12} |u_1|^{p} |u_2|^{p-2}u_2+\beta_{23} |u_3|^{p} |u_2|^{p-2}u_2,\\
(-\Delta)^s u_3 + \lambda_3 u_3 = \mu_3 |u_3|^{2p-2}u_1+\beta_{13} |u_1|^{p} |u_3|^{p-2}u_3+\beta_{23} |u_2|^{p} |u_3|^{p-2}u_3,
%u_j\geq 0,\ j=1,2,3.
\end{array} \right.
\end{equation}
with $u_j\in W^{s,2}(\mathbb{R}^N)$, $j=1,2,3$. The arguments of the former sections allow us to prove similar but weaker existence results for {\it bound} and {\it ground state solutions} of \eqref{eq:3eq}. In particular, by means of the techniques of the proofs of Theorems \ref{th:ac} and \ref{th:acb} we can only ascertain the existence of nonnegative bound and ground state solutions (see Theorems \ref{th:ground3eq} and \ref{th:bound3eq} below). However, following the ideas of \cite{liu-wang} and \cite{ac2} we will prove the existence of positive radial ground and bound states respectively (see Theorems \ref{th:posground3} and \ref{th:pert} below). Since the proofs of Theorems \ref{th:ground3eq} and \ref{th:bound3eq} are similar to the proofs Theorems \ref{th:ac} and \ref{th:acb} dealing with  system \eqref{eq:sistema-fractional}, we omit  the details and we only state the results.

According to the notation of the previous sections, let us denote by $\mathbb{E}^3=E\times E\times E$, ${\bf u}=(u_1,u_2,u_3)$, the norm $\|{\bf u} \|_{\mathbb{E}^3}^2=\|u_1 \|_1^2+\|u_2 \|_2^2+\|u_3 \|_3^2$, with $\|u_j\|_j^2$ defined as in the previos sections, and
\begin{equation*}
\begin{split}
I_j(u)&= \frac{1}{2} \int_{\mathbb{R}^N} (|(-\Delta)^{\frac{s}{2}}  u|^2+\lambda_j u^2)dx -\frac{1}{2p}\,\mu_j \int_{\mathbb{R}^N}  u^{2p}dx,\\
F({\bf u})&= \frac{1}{2p}\sum_{j=1}^{3} \mu_j\int_{\mathbb{R}^N} |u_j|^{2p}dx\\
G({\bf u})&=\frac{1}{p}\sum_{\substack{i,j=1\\i\neq j}}^3 \beta_{ij}\int_{\mathbb{R}^N}  |u_i|^{p}|u_j|^{p}dx\\
\Phi({\bf u}) &= \sum_{j=1}^{3}I_j(u_j)- \beta\, G({\bf u})= \frac 12 \|\textbf{u} \|_{\mathbb{E}^3}^2- F(\textbf{u}) -\beta\,G(\textbf{u}).
\end{split}
\end{equation*}
Analogously, we take $\Psi({\bf u})= (\Phi'({\bf u})\mid {\bf u})$ and the Nehari manifold
\begin{equation*}
\begin{split}
 \mathcal{M}&=\{{\bf u}\in \mathbb{E}^3\setminus\{{\bf 0}\}:  \Psi({\bf u})=0\}\\
 &=\{{\bf u}\in \mathbb{E}^3\setminus\{{\bf 0}\}:  \|{\bf u} \|_{\mathbb{E}^3}^2=2pF({\bf u})+2p\beta G({\bf u})\}.
 \end{split}
\end{equation*}
Let us point out that there are now three explicit solutions of \eqref{eq:3eq} given by ${\bf u}_1=(U_1,0,0)$, ${\bf u}_2=(0,U_2,0)$ and ${\bf u}_3=(0,0,U_3)$ with $U_j$ solution of the corresponding equation \eqref{eq:uncoupled}. Moreover, there could be solutions ${\bf u}=(u_1,u_2,u_3)$ of \eqref{eq:3eq} having one component equal to 0. In particular, if the component $u_k\equiv0$, then the remaining pair $(u_i,u_j)$, $i,j\neq k$, solves the system
\begin{equation*}
\left \{
\begin{array}{l}
\mkern+4mu(-\Delta)^s u_i+ \lambda_i u_i= \mu_i |u_i|^{2p-2}u_i+\beta_{ij} |u_j|^{p} |u_i|^{p-2}u_i,\quad u_i\in W^{s,2}(\mathbb{R}^N),\\
(-\Delta)^s  u_j+ \lambda_j u_j= \mu_j |u_j|^{2p-2}u_j+\beta_{ij} |u_i|^{p}|u_j|^{p-2}u_j,\quad u_j\in W^{s,2}(\mathbb{R}^N),\\
%\mkern+85mu u_1,u_2\geq 0\ \text{in}\ \mathbb{R}^N,
\end{array} \right.
\end{equation*}
which coincides with system \eqref{eq:sistema-fractional} with $\beta=\beta_{ij}$. Then, for any pair $(u_i,u_j)$ solving the former system, the function ${\bf u}$ with the remaining component equal to 0 solves \eqref{eq:3eq}. We will denote by ${\bf u}_{ij}$ these specific solutions.

\

Finally, we define the constants
\begin{equation*}
\gamma_{ij}^2=\inf_{\varphi\in H\setminus\{0\}}
\frac{\|\varphi\|_j^2}{\displaystyle\int_{\mathbb{R}^N}  U_i^2\varphi^2dx},\qquad i,j=1,2,3,\quad i\neq j.
\end{equation*}
Analogously to Proposition \ref{pr:ac1}, the following holds:
\begin{enumerate}
\item[$\textit{(1)}$] If $p=2$, then
\begin{itemize}
\item[$(i)$] the semi-trivial solutions ${\bf u}_i$, $i=1,2,3,$ are strict local minima of $\Phi$ constrained to $\mathcal{M}$ provided
 \begin{equation}\label{eq:jk1}
\beta_{ij}< \gamma_{ij}^2\qquad \forall\; i,j=1,2,3,\; i\neq j.
\end{equation}
\item[$(ii)$] the semi-trivial solutions ${\bf u}_i$, $i=1,2,3$, are saddle points of $\Phi$ constrained to $\mathcal{M}$ provided
 \begin{equation}\label{eq:jk2}
\forall\; i=1,2,3, \quad \exists j\neq i\quad  \mbox{such that}\quad \beta_{ij}> \gamma_{ij}^2.
\end{equation}
\end{itemize}
\item[$\textit{(2)}$] If $p>2$ the semi-trivial solutions ${\bf u}_i$, $i=1,2,3,$ are strict local minima of $\Phi$ constrained to $\mathcal{M}$ for all $\beta_{ij}\in\mathbb{R}$,  $i,j=1,2,3,\; i\neq j.$
\end{enumerate}
As in Proposition \ref{prop:acex}, we deduce that

\begin{enumerate}
\item If $p=2$,

\begin{itemize}
\item[$(i)$] and \eqref{eq:jk1} holds, the functional $\Phi$ has a Mountain-Pass (MP) critical point  ${\bf u}^*$ on $\mathcal{M}$ satisfying
\begin{equation}\label{eq:max3eq}
\Phi({\bf u}^*)>\max\limits_{i=1,2,3}\Phi({\bf u}_i).
\end{equation}
\item[$(ii)$] and \eqref{eq:jk2} holds, then $\Phi$ has a global minimum $\widetilde{{\bf u}}$ on $\mathcal{M}$ such that
\begin{equation}\label{eq:min3eq}
\Phi(\widetilde{{\bf u}})<\min\limits_{i=1,2,3}\Phi({\bf u}_i).
\end{equation}
\end{itemize}
\item If $p>2$, for any $\beta\in\mathbb{R}$ the functional $\Phi$ has a MP critical point ${\bf u}^{*}$ on $\mathcal{M}$ such that
\begin{equation}\label{eq:max3eq2}
\Phi({\bf u}^*)>\max\limits_{i=1,2,3}\Phi({\bf u}_i).
\end{equation}
\end{enumerate}
In a similar way as for the case of the system of two equations \eqref{eq:sistema-fractional}, one can show that ${\bf u}^*\geq 0$, $\widetilde{{\bf u}}\geq 0$. Nevertheless, although \eqref{eq:min3eq} (resp. \eqref{eq:max3eq}, \eqref{eq:max3eq2}) implies that $\widetilde{{\bf u}}\neq {\bf u}_i$, $i=1,2,3,$ (resp. ${\bf u}^*\neq{\bf u}_i$) it does not implies that $\widetilde{{\bf u}}$ is not equal to some ${\bf u}_{ij}$ (resp. it does not implies ${\bf u}^*\neq{\bf u}_{ij}$, for some pair $i,j$). Therefore, we can not ensure the positivity of such critical points. Summarizing, this technique allow us to prove the next results about ground and bound states respectively.

\begin{Theorem}\label{th:ground3eq} Assuming $p=2$ and \eqref{eq:jk2} the system \eqref{eq:3eq} has a nonegative radial ground state $\widetilde{{\bf u}}$.
\end{Theorem}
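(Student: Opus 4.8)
The plan is to reproduce, almost verbatim, the argument of Theorem \ref{th:ac}, now for three components, and to explain why the outcome is only non-negativity rather than strict positivity. First I would invoke the three-equation analogue of Proposition \ref{prop:acex}--\textit{(1)}-$(ii)$ recorded above: under $p=2$ and \eqref{eq:jk2}, the infimum of $\Phi$ on $\mathcal{M}$ is attained at some $\widetilde{{\bf u}}\in\mathcal{M}$, which by the corresponding version of Proposition \ref{pr:ac} is a non-trivial critical point of $\Phi$ on $\mathbb{E}^3$, and which satisfies \eqref{eq:min3eq}, namely $\Phi(\widetilde{{\bf u}})<\min_{i=1,2,3}\Phi({\bf u}_i)$. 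Applying the Stroock--Varopoulos inequality exactly as in \eqref{eq:abs} --- noting that $F$ and $G$ depend only on the moduli $|u_j|$ and that passing to $|\widetilde{{\bf u}}|$ does not increase $\|\cdot\|_{\mathbb{E}^3}$ --- I may replace $\widetilde{{\bf u}}$ by $|\widetilde{{\bf u}}|$ and assume $\widetilde{{\bf u}}\ge{\bf 0}$.

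The central task is to promote this to a ground state in the sense of \eqref{eq:gr}, that is, to show $\Phi(\widetilde{{\bf u}})$ is minimal among \emph{all} non-trivial critical points of $\Phi$ on $\mathbb{E}^3$, not merely the radial ones on $\mathcal{M}$. I would argue by contradiction: assume a non-trivial critical point $\widetilde{{\bf v}}$ with $\Phi(\widetilde{{\bf v}})<\min_{\mathcal{M}}\Phi$, set ${\bf u}_0=|\widetilde{{\bf v}}|$ (so that $\Phi({\bf u}_0)=\Phi(\widetilde{{\bf v}})$ and $\Psi({\bf u}_0)=0$), and let ${\bf u}_0^{\star}$ be the componentwise Schwartz symmetrization. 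Since each $\int|u_j|^{2p}$ is invariant and, by the Riesz rearrangement inequality, each coupling integral $\int|u_i|^{p}|u_j|^{p}$ does not decrease, while $\|{\bf u}_0^{\star}\|_{\mathbb{E}^3}^2\le\|{\bf u}_0\|_{\mathbb{E}^3}^2$, the scaling \eqref{eq:tm} produces a unique $t_0\in(0,1]$ with $t_0{\bf u}_0^{\star}\in\mathcal{M}$; the chain \eqref{eq:a1}--\eqref{eq:prev} then yields $\Phi(t_0{\bf u}_0^{\star})\le\Phi({\bf u}_0)=\Phi(\widetilde{{\bf v}})<\min_{\mathcal{M}}\Phi$, contradicting $t_0{\bf u}_0^{\star}\in\mathcal{M}$. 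Hence $\widetilde{{\bf u}}$ is a non-negative radial ground state.

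The hard part will be controlling the coupling term under symmetrization. The inequality $F({\bf u}_0^{\star})+\beta G({\bf u}_0^{\star})\ge F({\bf u}_0)+\beta G({\bf u}_0)$ is immediate when the coefficients multiplying the rearranged integrals are non-negative, but \eqref{eq:jk2} permits some $\beta_{ij}<0$, for which Riesz rearrangement moves the corresponding integral the wrong way; one must then treat those couplings separately or absorb them into the slack furnished by \eqref{eq:a1}. The other essential point --- and the reason the statement claims only $\widetilde{{\bf u}}\ge{\bf 0}$ --- is that \eqref{eq:min3eq} separates $\widetilde{{\bf u}}$ from the semi-trivial states ${\bf u}_i$ but \emph{not} from the two-component solutions ${\bf u}_{ij}$; thus a component of $\widetilde{{\bf u}}$ may vanish, the strong maximum principle delivers positivity only on the surviving components, and strict positivity of all three components is recovered only through the separate, perturbative route of Theorem \ref{th:posground3}.
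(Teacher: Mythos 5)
Correct, and essentially the paper's own route: the paper omits the proof of Theorem \ref{th:ground3eq}, stating only that it is similar to that of Theorem \ref{th:ac}, which is precisely what you reproduce (minimizer on $\mathcal{M}$ from the three-equation analogue of Proposition \ref{prop:acex}--\textit{(1)}-$(ii)$, Stroock--Varopoulos to pass to $|\widetilde{{\bf u}}|$, Schwartz symmetrization plus the scaling \eqref{eq:tm} for the ground-state property), and your explanation that \eqref{eq:min3eq} rules out the semi-trivial states ${\bf u}_i$ but not the two-component solutions ${\bf u}_{ij}$ --- hence only $\widetilde{{\bf u}}\geq {\bf 0}$ can be claimed --- is exactly the paper's own remark preceding the statement. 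The subtlety you flag, that \eqref{eq:jk2} permits some $\beta_{ij}<0$ so that the Riesz rearrangement inequality moves those coupling integrals the wrong way in the analogue of \eqref{eq:a2}, is a genuine point, but the paper's omitted proof glosses over it as well, so your proposal is a faithful rendering of the paper's argument rather than a deviation from it.
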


\begin{Theorem}\label{th:bound3eq} The following holds:
\begin{itemize}
\item[$(i)$] Assuming $p=2$ and \eqref{eq:jk1}, the system \eqref{eq:3eq} has a radial bound state ${\bf u}^*$  such that ${\bf u}^*\neq{\bf u}_i$, $i=1,2,3$.
Moreover, if  $\beta_{ij}>0$ and \eqref{eq:jk1} holds, then ${\bf u}^*\geq 0$.
\item[$(ii)$] Assuming $p>2$ the system \eqref{eq:3eq} has a radial bound state ${\bf u}^{*}$ such that ${\bf u}^{*}\neq{\bf u}_i$, $i=1,2,3$ for all $\beta_{ij}\in\mathbb{R}$,  $i,j=1,2,3,\; i\neq j.$
Moreover, if for $i,j=1,2,3,\; i\neq j.$  we have $\beta_{ij}>0$ then ${\bf u}^{*}\geq 0$.
\end{itemize}
\end{Theorem}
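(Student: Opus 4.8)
The plan is to transcribe, with the obvious bookkeeping changes, the proof of Theorem~\ref{th:acb} to the three--component setting, relying on the analogues of Propositions~\ref{pr:ac1} and~\ref{prop:acex} stated just above for system~\eqref{eq:3eq}. Thus for item~$(i)$, under hypothesis~\eqref{eq:jk1} with $p=2$, the three--equation version of Proposition~\ref{prop:acex}--$(1)$-$(i)$ furnishes a Mountain--Pass critical point ${\bf u}^*\in\mathcal{M}$ of $\Phi$ constrained to the Nehari manifold, satisfying the strict energy inequality~\eqref{eq:max3eq}; for item~$(ii)$, with $p>2$ and arbitrary $\beta_{ij}\in\mathbb{R}$, the analogue of Proposition~\ref{prop:acex}--$(2)$ gives a Mountain--Pass critical point satisfying~\eqref{eq:max3eq2}. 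In either case $\Phi({\bf u}^*)>\max_{i}\Phi({\bf u}_i)$ forces ${\bf u}^*\neq{\bf u}_i$ for $i=1,2,3$, which is the first assertion of each item.

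For the Mountain--Pass step itself I would split according to the dimension exactly as in Proposition~\ref{prop:acex}. For $N=2,3$ the compact embedding $E_{rad}\hookrightarrow L^{2p}(\mathbb{R}^N)$ yields the $(\mathrm{PS})$ condition on $\mathcal{M}$ through the verbatim argument of Lemma~\ref{lem:psN}, and the geometry provided by the local minima ${\bf u}_i$ closes the scheme. For $N=1$, where no such compactness is available, I would reproduce the evanescence analysis: setting $\rho_k=(v_1)_k^2+(v_2)_k^2+(v_3)_k^2$ for a $(\mathrm{PS})$ sequence ${\bf v}_k$, Lemma~\ref{lem:measure} rules out vanishing, and a translation $z_k$ recenters the mass so that the weak limit of ${\bf v}_k(\cdot+z_k)$ is a nontrivial critical point which, after translation, lies in $\mathcal{M}$; Fatou's lemma together with~\eqref{eq:M3} then identifies its energy with the Mountain--Pass level.

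To obtain nonnegativity when all $\beta_{ij}>0$, I would introduce the truncated functional $\Phi^+({\bf u})=\tfrac12\|{\bf u}\|_{\mathbb{E}^3}^2-F({\bf u}^+)-G({\bf u}^+)$ and its Nehari manifold $\mathcal{M}^+$, where ${\bf u}^+=(u_1^+,u_2^+,u_3^+)$. As in Theorem~\ref{th:acb}, the positivity $\beta_{ij}>0$ gives $F({\bf u}^+)+G({\bf u}^+)\le F({\bf u})+G({\bf u})$, whence the fibering parameter $t_{\bf u}$ with $t_{\bf u}{\bf u}\in\mathcal{M}^+$ satisfies $t_{\bf u}\ge1$; this makes ${\bf u}\mapsto t_{\bf u}{\bf u}$ a local homeomorphism near each ${\bf u}_i$ and, via~\eqref{eq:M3}, shows that each ${\bf u}_i$ is a strict local minimum of the non-$\mathcal{C}^2$ functional $\Phi^+$ on $\mathcal{M}^+$ (this circumvents the fact that Proposition~\ref{pr:ac1} does not apply directly to $\Phi^+$). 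Applying the Mountain--Pass scheme to $\Phi^+$ on $\mathcal{M}^+$ then produces a critical point ${\bf u}^*$ solving the truncated counterpart of~\eqref{eq:3eq}; since $\beta_{ij}>0$ the right--hand side of each component equation is nonnegative, so the maximum principle for $(-\Delta)^s+\lambda_j$ yields $u_j^*\ge0$, that is ${\bf u}^*\ge{\bf 0}$.

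The essential obstacle, and the reason the conclusion is only ${\bf u}^*\ge{\bf 0}$ rather than ${\bf u}^*>{\bf 0}$, is the presence of the partial solutions ${\bf u}_{ij}$ carrying exactly one vanishing component. In the two--equation case of Theorem~\ref{th:acb} a vanishing component forced the solution to reduce to some ${\bf u}_i$, contradicting $\Phi({\bf u}^*)>\max_i\Phi({\bf u}_i)$ and thereby unlocking strict positivity. Here the Mountain--Pass level only excludes ${\bf u}^*={\bf u}_i$; it does not exclude ${\bf u}^*={\bf u}_{ij}$, since ${\bf u}_{ij}$ may sit above $\max_i\Phi({\bf u}_i)$. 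Consequently one cannot guarantee that all three components are nontrivial, the strong maximum principle cannot be applied componentwise to upgrade $\ge$ to $>$, and strict positivity is lost---this is precisely the gap that forces the sharper positive statements to be postponed to Theorems~\ref{th:posground3} and~\ref{th:pert}.
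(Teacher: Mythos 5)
Your proposal is correct and follows essentially the same route as the paper, which omits the details of this proof precisely because it is the transcription of the argument for Theorem \ref{th:acb} (Mountain--Pass on $\mathcal{M}$ via the three--equation analogues of Propositions \ref{pr:ac1} and \ref{prop:acex}, the truncated functional $\Phi^+$ on $\mathcal{M}^+$ for nonnegativity when $\beta_{ij}>0$, and the dimension split $N=1$ versus $N=2,3$). Your closing paragraph also reproduces the paper's own explanation of why the conclusion degrades from ${\bf u}^*>{\bf 0}$ to ${\bf u}^*\geq{\bf 0}$: the level $\Phi({\bf u}^*)>\max_{i}\Phi({\bf u}_i)$ rules out ${\bf u}^*={\bf u}_i$ but not ${\bf u}^*={\bf u}_{ij}$, so the componentwise strong maximum principle cannot be invoked.
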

As commented before, both Theorems \ref{th:ground3eq} and \ref{th:bound3eq} are weaker than its counterparts for systems of two equations, namely Theorems \ref{th:ac} and \ref{th:acb} respectively, since we can not exclude that $\widetilde{{\bf u}}$ (similarly ${\bf u}^*$) could have one entry identically 0, i.e., it could be equal to some ${\bf u}_{ij}$. Nevertheless, as the following results show, we can still extend Theorems \ref{th:ac} and \ref{th:acb} to the case of general systems with $m>2$ equations,
\begin{equation}\label{eq:meq}
(-\Delta)^s u_j+ \lambda_j u_j = \mu_j |u_j|^{2p-2}u_j+\sum\limits_{\substack{ i=1\\ i\neq j}}^m\beta_{ij} |u_i|^{p} |u_j|^{p-2}u_j,
\qquad j=1,2,\ldots,m,
\end{equation}
with $\beta_{ij}=\beta_{ji}$. In the case concerning the existence of positive ground states, we follow the ideas of \cite{liu-wang}. Indeed, a similar proof to \cite[Theorem 2.1]{liu-wang} also allow us to extend Theorem \ref{th:ac} to prove the existence of positive ground states for the whole range $p\geq 2$ since this technique does not rely directly on the properties of the semi-trivial solutions as a critical points (on the contrary to Theorem \ref{th:acb} which strongly relies on which type of critical points
the semi-trivial solutions are for the energy functional constrained to the Nehari manifold, an inherited feature from Proposition \ref{prop:acex} which in turns follows from Proposition \ref{pr:ac1}).

 Let us define
\begin{equation*}
\mathcal{E}({\bf u}):=\frac{\displaystyle \|{\bf u}\|_{\mathbb{E}^m}^2}{\displaystyle\left(\sum_{j=1}^m\mu_j\int_{\mathbb{R}^N}|u_j|^{2p}dx+\sum\limits_{\substack{i,j=1\\
i\neq j}}^m\beta_{ij}\int_{\mathbb{R}^N}|u_i|^p|u_j|^pdx\right)^{\frac{1}{p}}},\quad \text{for } {\bf u}\in\mathbb{E}^m, {\bf u}\neq {\bf 0},
\end{equation*}
where, as before, $\displaystyle\|{\bf u}\|_{\mathbb{E}^m}^2=\sum_{j=1}^m\|u_j\|_j^2$ and
\begin{equation}\label{eq:minimizer}
\mathfrak{c}(\lambda)=\inf\limits_{u\in E\setminus\{0\}}\frac{\displaystyle\int_{\mathbb{R}^N}|(-\Delta)^{\frac{s}{2}}u|^2+\lambda u^2\, dx}{\displaystyle\left(\int_{\mathbb{R}^N}|u|^{2p}dx\right)^{\frac{1}{2p}}}=\inf\limits_{u\in E_{rad}\setminus\{0\}}\frac{\displaystyle\int_{\mathbb{R}^N}|(-\Delta)^{\frac{s}{2}}u|^2+ \lambda u^2\, dx}{\displaystyle\left(\int_{\mathbb{R}^N}|u|^{2p}dx\right)^{\frac{1}{2p}}}.
\end{equation}
Note that $U_{\lambda}(x)=U(\lambda^{\frac{1}{2s}} x)$, with $U$ the unique positive radial solution of \eqref{eq:soliton-alpha} (cf. \cite{fl,fls}), is the unique positive radial minimizer of \eqref{eq:minimizer}. Moreover, for any $\lambda>0$ we have,
\begin{equation}\label{eq:rel1lambda}
\mathfrak{c}(1)\lambda^{1-\frac{N}{2s}(1-\frac{1}{p})}=\mathfrak{c}(\lambda).%\inf\limits_{u\in E\setminus\{0\}}\frac{\displaystyle\int_{\mathbb{R}^N}|(-\Delta)^{\frac{s}{2}}u|^2+\lambda u^2\, dx}{\displaystyle\left(\mu_j\int_{\mathbb{R}^N}|u|^{2p}dx\right)^{\frac{1}{2p}}}=\inf\limits_{u\in E_{rad}\setminus\{0\}}\frac{\displaystyle\int_{\mathbb{R}^N}|(-\Delta)^{\frac{s}{2}}u|^2+\lambda u^2\, dx}{\displaystyle\left(\mu_j\int_{\mathbb{R}^N}|u|^{2p}dx\right)^{\frac{1}{2p}}},
\end{equation}
Let us also remark that the least positive critical value of the energy functional associated to \eqref{eq:meq}, i.e.
\begin{equation*}
\begin{split}
\Phi({\bf u})&= \frac 12 \|\textbf{u} \|_{\mathbb{E}^m}^2-\frac{1}{2p}\sum_{j=1}^{m} \mu_j\int_{\mathbb{R}^N} |u_j|^{2p}dx-\frac{\beta}{p}\sum_{\substack{i,j=1\\ i\neq j}}^m \beta_{ij}\int_{\mathbb{R}^N}  |u_i|^{p}|u_j|^{p}dx\\
&=\frac 12 \|\textbf{u} \|_{\mathbb{E}^m}^2- F(\textbf{u}) -\beta\,G(\textbf{u}),
\end{split}
\end{equation*}
can be reformulated as the infimum of the energy functional $\mathcal{E}$. Finally, for $\lambda>0$ let us set
\begin{equation*}
\Theta_{\lambda}=\frac{\displaystyle \int_{\mathbb{R}^N}|(-\Delta)^{\frac{s}{2}}U|^2+ U^2\, dx}{\displaystyle \int_{\mathbb{R}^N}|(-\Delta)^{\frac{s}{2}}U|^2+\lambda U^2\, dx}.
\end{equation*}
\begin{Theorem}\label{th:posground3}
Assume that
\begin{equation}\label{eq:hyp}\tag{$\mathcal{H}$}
\begin{split}
&\sum\limits_{j=1}^m\mu_j\Theta_{\frac{\lambda_j}{\lambda}}^p+\sum\limits_{\substack{i,j=1\\i\neq j }}^m\beta_{ij}\left(\Theta_{\frac{\lambda_i}{\lambda}}\Theta_{\frac{\lambda_j}{\lambda}}\right)^{\frac{p}{2}}\\
&>m^2\left\{\max\limits_{1\leq j\leq m}\mu_j\left(\frac{\lambda}{\lambda_j}\right)^{p\left(1-\frac{N}{2s}\left(1-\frac{1}{p}\right)\right)}+\max\limits_{\substack{1\leq i,j\leq m\\i\neq j}}\beta_{ij}\left(\frac{\lambda^2}{\lambda_i\lambda_j}\right)^{\frac{p}{2}\left(1-\frac{N}{2s}\left(1-\frac{1}{p}\right)\right)}\right\},
\end{split}
\end{equation}
then, the system \eqref{eq:meq} has a positive radial ground state $\widetilde{{\bf u}}$. Moreover, the ground state $\widetilde{{\bf u}}$ is given, up to a Lagrange multiplier, by
\begin{equation*}
\inf\limits_{{\bf u}\in\mathbb{E}^m\setminus \{{\bf 0}\}}\mathcal{E}({\bf u}).
\end{equation*}
\end{Theorem}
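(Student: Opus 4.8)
The plan is to realize the ground state as a minimizer of the scale--invariant quotient $\mathcal{E}$ and then rescale. Since $\mathcal{E}(t{\bf u})=\mathcal{E}({\bf u})$ for every $t>0$, minimizing $\mathcal{E}$ over $\mathbb{E}^m\setminus\{{\bf 0}\}$ (restricted to the cone where the denominator is positive, which is nonempty by \eqref{eq:hyp}) is equivalent to minimizing $\|{\bf u}\|_{\mathbb{E}^m}^2$ subject to the denominator being fixed; any minimizer ${\bf u}_0$ then satisfies an Euler--Lagrange equation which, after multiplying by a suitable scalar $t_0>0$ (the Lagrange multiplier), becomes exactly system \eqref{eq:meq}, so that $t_0{\bf u}_0$ solves \eqref{eq:meq}. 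Because the denominator of $\mathcal{E}$ depends only on the moduli $|u_j|$, while the Stroock--Varopoulos inequality gives $\big\||{\bf u}|\big\|_{\mathbb{E}^m}\le\|{\bf u}\|_{\mathbb{E}^m}$, we may replace ${\bf u}_0$ by $|{\bf u}_0|$ without increasing $\mathcal{E}$, and hence assume the minimizer is nonnegative; the radial reduction is legitimate by the second equality in \eqref{eq:minimizer}.

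First I would prove that $c:=\inf_{\mathbb{E}^m\setminus\{{\bf 0}\}}\mathcal{E}>0$ and that it is attained. Positivity follows from the continuous embedding $E\hookrightarrow L^{2p}$, which bounds the denominator by $C\|{\bf u}\|_{\mathbb{E}^m}^{2p}$ against the numerator $\|{\bf u}\|_{\mathbb{E}^m}^2$, giving $\mathcal{E}\ge C^{-1/p}>0$. For attainment I would take a radial minimizing sequence normalized so that its denominator equals $1$; by \eqref{eq:minimizer} it is bounded in $\mathbb{E}^m$, hence weakly convergent up to a subsequence. For $N=2,3$ the compact embedding $E_{rad}\hookrightarrow L^{2p}(\mathbb{R}^N)$ (as in Lemma \ref{lem:psN}) passes the denominator to the limit and, by weak lower semicontinuity of the norm, produces a minimizer. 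For $N=1$ there is no such compactness, so I would argue exactly as in the proof of Proposition \ref{prop:acex}: rule out evanescence via Lemma \ref{lem:measure}, recenter the sequence by a suitable translation to restore non-vanishing, and identify the (translated) weak limit as a nontrivial minimizer.

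The hard part will be showing that the minimizer has \emph{all} components nontrivial, i.e.\ that it is not one of the partially coupled configurations ${\bf u}_{ij}$; this is exactly the role of hypothesis \eqref{eq:hyp}, and I would establish it by a two--sided comparison. For the upper bound I would test $\mathcal{E}$ with ${\bf u}=(a_1U_\lambda,\dots,a_mU_\lambda)$, all components proportional to the same rescaled soliton $U_\lambda(x)=U(\lambda^{1/2s}x)$ and centered at the origin so the cross integrals do not vanish; choosing $a_j=\Theta_{\lambda_j/\lambda}^{1/2}$ and using the scaling identities $\|U_\lambda\|_{\lambda_j}^2=\lambda^{1-\frac{N}{2s}}\Theta_{\lambda_j/\lambda}^{-1}\int_{\mathbb{R}^N}U^{2p}$ and $\int_{\mathbb{R}^N}U_\lambda^{2p}=\lambda^{-\frac{N}{2s}}\int_{\mathbb{R}^N}U^{2p}$, the combination produces the overall factor $\lambda^{\theta}$ with $\theta:=1-\tfrac{N}{2s}(1-\tfrac1p)>0$ (positive by the standing assumption $\tfrac{p-1}{2p}N<s$) and reproduces precisely the left--hand side of \eqref{eq:hyp} in the denominator, yielding
\begin{equation*}
c\le \lambda^{\theta}\Big(\int_{\mathbb{R}^N} U^{2p}\Big)^{1-\frac1p}\,\frac{m}{\Big(\sum_{j}\mu_j\Theta_{\lambda_j/\lambda}^p+\sum_{i\neq j}\beta_{ij}(\Theta_{\lambda_i/\lambda}\Theta_{\lambda_j/\lambda})^{p/2}\Big)^{1/p}}.
\end{equation*}
For the lower bound I would show that if some component of a configuration vanishes, then $\mathcal{E}$ on it is at least the right--hand side of \eqref{eq:hyp} (up to the same factor): bounding each numerator term from below by $\mathfrak{c}(\lambda_j)\big(\int|u_j|^{2p}\big)^{1/p}$ via \eqref{eq:minimizer}, using the scaling law \eqref{eq:rel1lambda} to turn $\mathfrak{c}(\lambda_j)$ into the powers $(\lambda/\lambda_j)^{p\theta}$, and controlling the denominator by Cauchy--Schwarz and crude term counting, the $m^2$ factor accounting for the at most $m$ self--interaction and $m(m-1)$ coupling terms. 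Hypothesis \eqref{eq:hyp}, being exactly the inequality (upper bound) $<$ (lower bound), then forces $c$ to lie strictly below the value of $\mathcal{E}$ on any configuration with a vanishing component, so the minimizer ${\bf u}_0$ must satisfy $u_j\not\equiv0$ for every $j$.

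Finally I would conclude as in Theorem \ref{th:ac}. Having ${\bf u}_0\ge{\bf 0}$ with every component nontrivial and solving \eqref{eq:meq} after rescaling, the strong maximum principle applied to each equation upgrades $u_j\ge0,\ u_j\not\equiv0$ to $u_j>0$, so $\widetilde{{\bf u}}>{\bf 0}$. That $\widetilde{{\bf u}}$ is a ground state in the sense of Definition \ref{def:ac} follows because the scale invariance of $\mathcal{E}$ identifies $\inf\mathcal{E}$ with the least energy among all nontrivial solutions: every nontrivial critical point lies on the Nehari manifold, where $\Phi=\frac{p-1}{2p}\|\cdot\|_{\mathbb{E}^m}^2$ is an increasing function of the quantity minimized by $\mathcal{E}$. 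The main obstacle is the component--nonvanishing step; the failure of compactness for $N=1$ is a secondary difficulty, handled by the translation argument inherited from Proposition \ref{prop:acex}.
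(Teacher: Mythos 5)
Your proposal is correct and follows essentially the same route as the paper's proof: minimize the scale-invariant quotient $\mathcal{E}$ over radial nonnegative configurations, get the upper bound by testing with $u_j=\Theta_{\lambda_j/\lambda}^{1/2}U_\lambda$, get the lower bound on configurations with a vanishing component via \eqref{eq:minimizer}, \eqref{eq:rel1lambda} and Cauchy--Schwarz so that \eqref{eq:hyp} forces every component of the minimizer to be nontrivial, and conclude with the maximum principle and a rescaling, with the Nehari identity $\Phi|_{\mathcal{M}}=\frac{p-1}{2p}\|\cdot\|_{\mathbb{E}^m}^2$ identifying the minimizer as a ground state. The differences are minor and mostly in your favor --- you supply the attainment/compactness details the paper leaves implicit --- except for one small caveat: the reduction to radial functions for the coupled functional $\mathcal{E}$ does not follow from the single-function identity in \eqref{eq:minimizer}, but from Schwarz symmetrization of the whole vector (P\'olya--Szeg\H{o} plus rearrangement inequalities for the coupling terms), which is what the paper actually invokes.
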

\begin{proof}
By the properties of he Schwarz symmetrization we have that
\begin{equation*}
\mathfrak{i}(m):=\inf\limits_{{\bf u}\in\mathbb{E}^m\setminus \{{\bf 0}\}}\mathcal{E}({\bf u})=\inf\limits_{{\bf u}\in\mathbb{E}_{rad}^m\setminus \{{\bf 0}\}}\mathcal{E}({\bf u}).
\end{equation*}
Hence, the infimum $\mathfrak{i}(m)$ is achieved by a radial ${\bf u}_0=(u_{10}, u_{20},\ldots,u_{m0})$ and we can assume that $u_{j0}\geq 0$ for any $j=1,2,\ldots, m$, otherwise we consider $|u_{j0}|$ (see \eqref{eq:abs}). Let us prove that $u_{j0}>0$ for any $j=1,2,\ldots, m$.

For $j\in\{1,2,\ldots, m\}$ let us set $u_j=\sqrt{\Theta_{\frac{\lambda_j}{\lambda}}}U_{\lambda}$. First, note that
\begin{equation*}
\begin{split}
\int_{\mathbb{R}^N}\!|(-\Delta)^{\frac{s}{2}}u_j|^2+\lambda_j u_j^2=&\Theta_{\frac{\lambda_j}{\lambda}}\int_{\mathbb{R}^N}\!|(-\Delta)^{\frac{s}{2}}U_{\lambda}|^2+\lambda_j U_{\lambda}^2=\lambda^{1-\frac{N}{2s}}\Theta_{\frac{\lambda_j}{\lambda}}\int_{\mathbb{R}^N}\!|(-\Delta)^{\frac{s}{2}}U|^2+\frac{\lambda_j}{\lambda} U^2\\
=&\lambda^{1-\frac{N}{2s}}\int_{\mathbb{R}^N}\!|(-\Delta)^{\frac{s}{2}}U|^2+ U^2=\int_{\mathbb{R}^N}\!|(-\Delta)^{\frac{s}{2}}U_{\lambda}|^2+ U_{\lambda}^2.
\end{split}
\end{equation*}
Thus,
\begin{equation*}
\begin{split}
&\sum_{j=1}^m\mu_j\int_{\mathbb{R}^N}|u_j|^{2p}+\sum\limits_{\substack{i,j=1\\i\neq j }}^m\beta_{ij}\int_{\mathbb{R}^N}|u_j|^p|u_i|^p\\
&=\left(\sum_{j=1}^m\mu_j\Theta_{\frac{\lambda_j}{\lambda}}^p+\sum\limits_{\substack{i,j=1\\i\neq j }}^m\beta_{ij}\left(\Theta_{\frac{\lambda_i}{\lambda}}\Theta_{\frac{\lambda_j}{\lambda}}\right)^{\frac{p}{2}}\right)\int_{\mathbb{R}^N}|U_{\lambda}|^{2p}\\
&=\frac{1}{\mathfrak{c}^p(1)\lambda^{1-\frac{N}{2s}\left(1-\frac{1}{p}\right)}}\left(\sum_{j=1}^m\mu_j\Theta_{\frac{\lambda_j}{\lambda}}^p+\sum\limits_{\substack{i,j=1\\i\neq j }}^m\beta_{ij}\left(\Theta_{\frac{\lambda_i}{\lambda}}\Theta_{\frac{\lambda_j}{\lambda}}\right)^{\frac{p}{2}}\right)\left(\int_{\mathbb{R}^N}|(-\Delta)^{\frac{s}{2}}U_{\lambda}|^2+ \lambda U_{\lambda}^2\right)^p\\
&=\frac{1}{m^p\mathfrak{c}^p(1)\lambda^{1-\frac{N}{2s}\left(1-\frac{1}{p}\right)}}\left(\sum_{j=1}^m\mu_j\Theta_{\frac{\lambda_j}{\lambda}}^p\!+\!\sum\limits_{\substack{i,j=1\\i\neq j }}^m\beta_{ij}\left(\Theta_{\frac{\lambda_i}{\lambda}}\Theta_{\frac{\lambda_j}{\lambda}}\right)^{\frac{p}{2}}\right)\!\!\left(\sum_{k=1}^m\int_{\mathbb{R}^N}|(-\Delta)^{\frac{s}{2}}u_k|^2+ \lambda_k u_k^2\right)^p,
\end{split}
\end{equation*}
and we deduce
\begin{equation*}
\mathfrak{i}(m)\leq \frac{m\mathfrak{c}(1)\lambda^{1-\frac{N}{2s}\left(1-\frac{1}{p}\right)}}{\displaystyle\left(\sum_{j=1}^m\mu_j\Theta_{\frac{\lambda_j}{\lambda}}^p+\sum\limits_{\substack{i,j=1\\i\neq j }}^m\beta_{ij}\left(\Theta_{\frac{\lambda_i}{\lambda}}\Theta_{\frac{\lambda_j}{\lambda}}\right)^{\frac{p}{2}}\right)^{\frac{1}{p}}}.
\end{equation*}
On the other hand, using again the properties of Schwarz symmetrization, we have that, for $l\in\{1,2,\ldots, m\}$,
\begin{equation*}
\mathfrak{i}(m,l):=\inf\limits_{\substack{{\bf u}\in\mathbb{E}^m\setminus \{{\bf 0}\}\\ u_l=0}}\mathcal{E}({\bf u})=\inf\limits_{\substack{{\bf u}\in\mathbb{E}_{rad}^m\setminus \{{\bf 0}\}\\ u_l=0}}\mathcal{E}({\bf u}).
\end{equation*}
Let us assume that $u_m=0$. Then,
\begin{equation*}
\begin{split}
&\sum_{j=1}^m\mu_j\int_{\mathbb{R}^N}|u_j|^{2p}+\sum\limits_{\substack{i,j=1\\i\neq j}}^m\beta_{ij}\int_{\mathbb{R}^N}|u_j|^p|u_i|^p\\
\leq& \sum_{j=1}^{m-1}\mu_j\int_{\mathbb{R}^N}|u_j|^{2p}+\sum\limits_{\substack{i,j=1\\i\neq j }}^{m-1}\beta_{ij}\left(\int_{\mathbb{R}^N}|u_i|^{2p}\right)^{\frac{1}{2}}\left(\int_{\mathbb{R}^N}|u_j|^{2p}\right)^{\frac{1}{2}}\\
\leq&\frac{1}{\mathfrak{c}^p(1)}\left\{\sum\limits_{j=1}^{m-1}\frac{\mu_j}{\lambda_j^{p\left(1-\frac{N}{2s}\left(1-\frac{1}{p}\right)\right)}}\left(\int_{\mathbb{R}^N}|(-\Delta)^{\frac{s}{2}}u_j|^2+ \lambda_j u_j^2\right)^{p}\right.\\
&+\left.\sum\limits_{\substack{i,j=1\\i\neq j}}^{m-1}\frac{\beta_{ij}}{(\lambda_i\lambda_j)^{\frac{p}{2}\left(1-\frac{N}{2s}\left(1-\frac{1}{p}\right)\right)}}\left(\int_{\mathbb{R}^N}|(-\Delta)^{\frac{s}{2}}u_i|^2+ \lambda_i u_i^2\right)^{\frac{p}{2}}\left(\int_{\mathbb{R}^N}|(-\Delta)^{\frac{s}{2}}u_j|^2+ \lambda_j u_j^2\right)^{\frac{p}{2}}\right\}\\
\leq&\frac{1}{\mathfrak{c}^p(1)}\left\{\max\limits_{1\leq j\leq m-1}\frac{\mu_j}{\lambda_j^{p\left(1-\frac{N}{2s}\left(1-\frac{1}{p}\right)\right)}}\sum_{k=1}^{m-1}\left(\int_{\mathbb{R}^N}|(-\Delta)^{\frac{s}{2}}u_k|^2+ \lambda u_k^2\right)^{p}\right.\\
&+\max\limits_{\substack{1\leq i,j\leq m-1\\i\neq j}}\frac{\beta_{ij}}{(\lambda_i\lambda_j)^{\frac{p}{2}\left(1-\frac{N}{2s}\left(1-\frac{1}{p}\right)\right)}}\\
&\mkern+100mu\left.\times\sum\limits_{\substack{i,j=1\\i\neq j}}^{m-1}\left(\int_{\mathbb{R}^N}|(-\Delta)^{\frac{s}{2}}u_i|^2+ \lambda_i u_i^2\right)^{\frac{p}{2}}\left(\int_{\mathbb{R}^N}|(-\Delta)^{\frac{s}{2}}u_j|^2+ \lambda_i u_j^2\right)^{\frac{p}{2}}\right\}\\
&\leq\frac{1}{\mathfrak{c}^p(1)}\left\{\max\limits_{1\leq j\leq m-1}\frac{\mu_j}{\lambda_j^{p\left(1-\frac{N}{2s}\left(1-\frac{1}{p}\right)\right)}}+\frac{m-2}{m-1}\max\limits_{\substack{1\leq i,j\leq m-1\\i\neq j}}\frac{\beta_{ij}}{(\lambda_i\lambda_j)^{\frac{p}{2}\left(1-\frac{N}{2s}\left(1-\frac{1}{p}\right)\right)}}\right\}\\
&\mkern+45mu\times \left(\sum_{k=1}^{m}\int_{\mathbb{R}^N}|(-\Delta)^{\frac{s}{2}}u_k|^2+ \lambda_k u_k^2\right)^{p}.
\end{split}
\end{equation*}
Taking in mind the above inequalities it follows that
\begin{equation*}
\mathfrak{i}(m,m)\geq\frac{\mathfrak{c}(1)}{\left(\max\limits_{1\leq j\leq m-1}\frac{\mu_j}{\lambda_j^{p\left(1-\frac{N}{2s}\left(1-\frac{1}{p}\right)\right)}}+\frac{m-2}{m-1}\max\limits_{\substack{1\leq i,j\leq m-1\\i\neq j}}\frac{\beta_{ij}}{(\lambda_i\lambda_j)^{\frac{p}{2}\left(1-\frac{N}{2s}\left(1-\frac{1}{p}\right)\right)}}\right)^{\frac{1}{p}}}.
\end{equation*}
Then, since the same estimate holds for all $l=1,2\ldots,m$, under assumption \eqref{eq:hyp}, we get
\begin{equation*}
\mathfrak{i}(m)<\min\limits_{1\leq l\leq m} \mathfrak{i}(m,l),
\end{equation*}
and we conclude $u_{j0}\neq 0$. Thus, by the maximum principle it follows that $u_{j0}(x)>0$ for all $j$ and $x\in\mathbb{R}^N$. Finally, observe that $\frac{\mathfrak{i}(m)}{\|{\bf u}\|}{\bf u}$  is a nontrivial positive radial ground state of \eqref{eq:meq}.
\end{proof}

To finish we extend Theorem \ref{th:bound3eq} by proving the existence of a {\it positive bound state} of system \eqref{eq:meq} provided the $\beta_{ij}$ are small enough.

\begin{Theorem}\label{th:pert}
If $\beta_{jk}=\varepsilon b_{ij}$ for all $i,j=1,2,\ldots, m$, $i\neq j$ and $|\varepsilon|$ small enough, then system \eqref{eq:3eq} has a radial bound state ${\bf u}_\varepsilon$ such that ${\bf u}_\varepsilon \to {\bf z}:=(U_1,U_2,\ldots ,U_m)$ as $\varepsilon\to 0$. Moreover, if $\beta_{ij}=\varepsilon b_{ij}> 0$ then ${\bf u}_\varepsilon>{\bf 0}$.
\end{Theorem}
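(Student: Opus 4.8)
The plan is to realize ${\bf u}_\varepsilon$ as a branch of critical points bifurcating from the uncoupled solution ${\bf z}=(U_1,\ldots,U_m)$ by means of the Implicit Function Theorem, working throughout in the radial space $\mathbb{E}^m_{rad}$. When $\varepsilon=0$ the coupling in \eqref{eq:meq} vanishes and the system splits into the $m$ scalar equations \eqref{eq:uncoupled}, so ${\bf z}$ is a critical point of the unperturbed energy $\Phi_0({\bf u})=\sum_{j=1}^m I_j(u_j)$. Writing the perturbed energy as $\Phi_\varepsilon({\bf u})=\Phi_0({\bf u})-\varepsilon\,G_b({\bf u})$ with $G_b({\bf u})=\frac{1}{p}\sum_{i\neq j}b_{ij}\int_{\mathbb{R}^N}|u_i|^p|u_j|^p\,dx$, I would apply the IFT to the $\mathcal{C}^1$ map $(\varepsilon,{\bf u})\mapsto\nabla\Phi_\varepsilon({\bf u})$ at the point $(0,{\bf z})$; here $\Phi_\varepsilon$ is $\mathcal{C}^2$ precisely because $p\ge 2$. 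Since $\Phi_\varepsilon$ is invariant under rotations, Palais' principle of symmetric criticality guarantees that the radial critical points produced this way are genuine bound states of \eqref{eq:meq}, which is what legitimizes restricting the entire construction to $\mathbb{E}^m_{rad}$.

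The decisive point is the invertibility of $\Phi_0''({\bf z})$ on $\mathbb{E}^m_{rad}$. Because $\Phi_0$ splits, this Hessian is block diagonal, its $j$-th block being $I_j''(U_j)=\mathrm{Id}-K_j$, where $K_j$ is the operator associated through $(\cdot\mid\cdot)_j$ to the quadratic form $h\mapsto \mu_j(2p-1)\int_{\mathbb{R}^N}U_j^{2p-2}h^2\,dx$. The operator $K_j$ is compact in every dimension $N=1,2,3$: splitting the integral over a large ball, where $E\hookrightarrow L^2_{loc}$ is compact, and over its complement, where the weight $U_j^{2p-2}$ is uniformly small, shows that $K_j$ sends bounded sequences to strongly convergent ones. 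Hence each block is Fredholm of index zero and it suffices to prove injectivity on $E_{rad}$. This is exactly where the nondegeneracy theorem of Frank--Lenzmann--Silvestre \cite{fl,fls} enters: the kernel of $(-\Delta)^s+\lambda_j-\mu_j(2p-1)U_j^{2p-2}$ on $L^2(\mathbb{R}^N)$ is spanned by the translations $\partial_{x_1}U_j,\ldots,\partial_{x_N}U_j$, each of which is odd in one variable and therefore \emph{not} radial. Consequently $I_j''(U_j)$ is injective, hence an isomorphism, on $E_{rad}$, so $\Phi_0''({\bf z})$ is an isomorphism of $\mathbb{E}^m_{rad}$.

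Granting this, the IFT yields $\varepsilon_0>0$ and a unique $\mathcal{C}^1$ curve $\varepsilon\mapsto{\bf u}_\varepsilon\in\mathbb{E}^m_{rad}$, defined for $|\varepsilon|<\varepsilon_0$, with $\nabla\Phi_\varepsilon({\bf u}_\varepsilon)={\bf 0}$ and ${\bf u}_\varepsilon\to{\bf z}$ in $\mathbb{E}^m$ as $\varepsilon\to0$; these are the desired radial bound states. For the positivity claim, assume $\beta_{ij}=\varepsilon b_{ij}>0$. Testing the $j$-th equation against the negative part $u_{\varepsilon,j}^-$ and invoking the fractional Kato (Stroock--Varopoulos) inequality to bound the quadratic part from below by $\|u_{\varepsilon,j}^-\|_j^2$, one arrives at
\begin{equation*}
\|u_{\varepsilon,j}^-\|_j^2\le \mu_j\int_{\mathbb{R}^N}(u_{\varepsilon,j}^-)^{2p}\,dx+\varepsilon\sum_{\substack{i=1\\i\neq j}}^m b_{ij}\int_{\mathbb{R}^N}|u_{\varepsilon,i}|^p(u_{\varepsilon,j}^-)^p\,dx,
\end{equation*}
the coupling term appearing with the nonnegative coefficient $\varepsilon b_{ij}>0$ and thus keeping the estimate consistent. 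Since ${\bf u}_\varepsilon\to{\bf z}\ge{\bf 0}$ forces $\|u_{\varepsilon,j}^-\|_j\to0$, the Sobolev embedding makes the right-hand side an $o(1)$-multiple of $\|u_{\varepsilon,j}^-\|_j^2$ (for $p=2$ the borderline power is absorbed by the factor $\varepsilon$), so $u_{\varepsilon,j}^-\equiv0$ for $|\varepsilon|$ small; each component is then nonnegative and nontrivial, and the strong maximum principle for $(-\Delta)^s+\lambda_j$ gives ${\bf u}_\varepsilon>{\bf 0}$.

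The main obstacle is the second step, namely proving that the linearized blocks $I_j''(U_j)$ are invertible. This is not self-contained---it rests on the fractional nondegeneracy result of \cite{fl,fls}---and the essential structural idea is that passing to the radial class annihilates the otherwise unavoidable translational kernel, which is precisely what makes the Implicit Function Theorem applicable.
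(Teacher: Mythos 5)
Your proposal is correct, and it splits naturally into two halves. The existence half is essentially the paper's own argument: the paper also writes $\Phi_\varepsilon({\bf u})=\Phi_0({\bf u})-\varepsilon\widetilde{G}({\bf u})$, invokes the Frank--Lenzmann--Silvestre results \cite{fl,fls} to assert that each $U_j$, hence ${\bf z}$, is a non-degenerate critical point of $\Phi_0$ on the radial space, and concludes by the local inversion theorem that critical points ${\bf u}_\varepsilon\to{\bf z}$ exist for $|\varepsilon|$ small; your Fredholm decomposition $I_j''(U_j)=\mathrm{Id}-K_j$ with $K_j$ compact, together with the remark that the translational kernel $\mathrm{span}\{\partial_{x_1}U_j,\ldots,\partial_{x_N}U_j\}$ is killed by radiality, simply makes explicit what the paper compresses into a single citation, and the appeal to symmetric criticality fills a point the paper leaves implicit. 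Where you genuinely diverge is the positivity step. The paper never tests against negative parts: it uses the variational characterization $\|U_j\|_j^{\frac{p-1}{p}}=\inf_{u\in E_{rad}\setminus\{0\}}\|u\|_j\big/\bigl(\mu_j\int_{\mathbb{R}^N}|u|^{2p}dx\bigr)^{\frac{1}{2p}}$ to show that \emph{any} nonvanishing signed part must carry almost a full soliton's worth of norm, $\|u_{j\varepsilon}^{\pm}\|_j^{2p-2}\ge\|U_j\|_j^{2(p-1)}+o(1)$, and then gets a contradiction from the Nehari identity $\Phi({\bf u}_\varepsilon)=\frac{p-1}{2p}\|{\bf u}_\varepsilon\|^2\to\frac{p-1}{2p}\|{\bf z}\|^2$, since a nontrivial negative part would inflate the limit norm by $\|U_k\|_k^2$. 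Your route --- testing the $j$-th equation against $u_{\varepsilon,j}^-$, using the favorable sign of the fractional cross term (Kato/Stroock--Varopoulos), and absorbing the right-hand side into $o(1)\|u_{\varepsilon,j}^-\|_j^2$ --- is more direct and equally valid; the only caveat is that the intermediate claim $\|u_{\varepsilon,j}^-\|_j\to 0$ rests on continuity of $u\mapsto u^-$ in $H^s$, which is true but not obvious, whereas your absorption actually needs only $\|u_{\varepsilon,j}^-\|_{L^{2p}}\to 0$, and that is immediate from the pointwise bound $0\le u_{\varepsilon,j}^-\le|u_{\varepsilon,j}-U_j|$ (valid since $U_j\ge 0$) plus the Sobolev embedding; stated that way your argument is self-contained, while the paper's comparison argument avoids the issue at the cost of invoking the energy identity on the Nehari manifold.
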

\begin{proof}
The proof is analogous to the proof of \cite[Theorem 6.4]{ac2} and it is based on a perturbation argument. If $\beta_{ij}=\varepsilon b_{ij}$, then we have $\Phi({\bf u})=\Phi_{\varepsilon}({\bf u})=\Phi_0({\bf u})-\varepsilon\widetilde{G}({\bf u})$, where
\begin{equation*}
\Phi_0({\bf u})=\sum_{j=1}^mI_j(u_j)\quad\text{and}\quad\widetilde{G}({\bf u})=\sum\limits_{\substack{i,j=1\\i\neq j}}^{m}b_{ij}\int_{\mathbb{R}^N}  |u_i|^{p}|u_j|^{p}dx.
\end{equation*}
Since, by the results of \cite{fl,fls}, each solution $U_j$ of \eqref{eq:uncoupled} is a (non-degenerate) critical point on $E_{rad}$ of the functional $I_i$, $i=1,2,\ldots, m$, it follows that ${\bf z}=(U_1,U_2,\ldots, U_m)$ is a (non-degenerate) critical point on $\mathbb{E}_{rad}^m$ of the unperturbed functional $\Phi_0$. Then, by the local inversion theorem we get the existence of critical points ${\bf u}_{\varepsilon}$ of $\Phi_{\varepsilon}$ on $\mathbb{E}_{rad}$ for $\varepsilon>0$ small enough. Moreover, ${\bf u}_{\varepsilon}\to{\bf z}$ as $\varepsilon\to 0$. To complete the proof it is enough to show that ${\bf u}_{\varepsilon}>0$. Let us take ${\bf u}_{\varepsilon}^{+
}=(u_{1\varepsilon}^+,u_{2\varepsilon}^+,\ldots,u_{m\varepsilon}^+)$ and ${\bf u}_{\varepsilon}^{-}=(u_{1\varepsilon}^-,u_{2\varepsilon}^-,\ldots,u_{m\varepsilon}^-)$. Since
\begin{equation*}
\|U_j\|_j^{\frac{p-1}{p}}=\inf\limits_{u\in E_{rad}\setminus\{0\}}\frac{\|u\|_j}{\displaystyle\left(\mu_j\int_{\mathbb{R}^N}|u|^{2p}dx\right)^{\frac{1}{2p}}},
\end{equation*}
we get
\begin{equation}\label{eq:ineq}
\|u_{j\varepsilon}^{\pm}\|_j\geq\|U_j\|_j^{\frac{p-1}{p}}\left(\mu_j\int_{\mathbb{R}^N}|u_{j\varepsilon}^{\pm}|^{2p}dx\right)^{\frac{1}{2p}}.
\end{equation}
Using the equations of the system \eqref{eq:meq}, it follows that
\begin{equation*}
\begin{split}
\|u_{j\varepsilon}^{\pm}\|_j^2&=\mu_j\int_{\mathbb{R}^N}|u_{j\varepsilon}^{\pm}|^{2p}dx+\varepsilon\int_{\mathbb{R}^N}\left[|u_{j\varepsilon}^{\pm}|^p\sum\limits_{\substack{i,j=1\\ i\neq j}}^{m}b_{ij}|u_{i\varepsilon}|^p\right]dx\\
&\leq \mu_j\int_{\mathbb{R}^N}|u_{j\varepsilon}^{\pm}|^{2p}dx+\varepsilon\left(\int_{\mathbb{R}^N}|u_{j\varepsilon}^{\pm}|^{2p}dx\right)^{\frac12}\sum\limits_{\substack{i,j=1\\i\neq j}}^{m}b_{ij}\left(\int_{\mathbb{R}^N}|u_{i\varepsilon}|^{2p}dx\right)^{\frac12}
\end{split}
\end{equation*}
Then, by \eqref{eq:ineq}, we find
\begin{equation}\label{eq:ineq2}
\|u_{j\varepsilon}^{\pm}\|_j^2\leq\frac{\|u_{j\varepsilon}^{\pm}\|_j^{2p}}{\|U_j\|_j^{2(p-1)}}+\varepsilon\theta_{\varepsilon}\frac{\|u_{j\varepsilon}^{\pm}\|_j^{p}}{\|U_j\|_j^{p-1}}
\end{equation}
where
\begin{equation*}
\theta_{\varepsilon}=\mu_j^{-\frac12}\sum\limits_{\substack{i,j=1\\i\neq j}}^{m}b_{ij}\left(\int_{\mathbb{R}^N}|u_{i\varepsilon}|^{2p}dx\right)^{\frac12}
\end{equation*}
Then, since $\varepsilon\theta_{\varepsilon}\to0$ as $\varepsilon\to0$, for each $j$ such that $\|u_{j\varepsilon}^{\pm}\|_j>0$, from \eqref{eq:ineq2} we have
\begin{equation}\label{eq:ineq3}
\|u_{j\varepsilon}^{\pm}\|_j^{2p-2}\geq \|U_j\|_j^{2(p-1)}+o(1).
\end{equation}
Hence, since ${\bf u}_{\varepsilon}\to{\bf z}>{\bf 0}$ as $\varepsilon\to 0$, we have $\|u_{j\varepsilon}^{+}\|_j>0$ for $j=1,2,3$. Consequently, from \eqref{eq:ineq3}, we get
\begin{equation}\label{eq:ineq4}
\|u_{j\varepsilon}^{+}\|_j\geq \|U_j\|_j+o(1),\quad \forall j=1,2,3.
\end{equation}
In order to prove that ${\bf u}_{\varepsilon}\geq 0$ let suppose by contradiction that there exist $k\in\{1,2,3\}$ such that $\|u_{k\varepsilon}^{-}\|_k>0$. Then, by \eqref{eq:ineq3}, it follows that
\begin{equation}\label{eq:ineq5}
\|{\bf u}_{\varepsilon}^-\|^2=\sum\limits_{j=1}^{m}\|u_{j\varepsilon}^-\|_1^2\geq \|U_k\|_k^2+o(1).
\end{equation}
Since $\Phi({\bf u}_{\varepsilon})=\frac{p-1}{2p}\|{\bf u}_{\varepsilon}\|^2=\frac{p-1}{2p}\left(\|{\bf u}_{\varepsilon}^+\|^2+\|{\bf u}_{\varepsilon}^-\|^2\right)$, by \eqref{eq:ineq4} and \eqref{eq:ineq5}, we deduce
\begin{equation}\label{eq:ineq6}
\Phi({\bf u}_{\varepsilon})\geq\frac{p-1}{2p}\sum_{j=1}^m\|U_j\|_j^{2}+\frac{p-1}{2p}\|U_k\|_k^2+o(1)
\end{equation}
On the other hand, since ${\bf u}_{\varepsilon}\to{\bf z}$ we also deduce
\begin{equation*}
\Phi({\bf u}_{\varepsilon})=\frac{p-1}{2p}\|{\bf u}_\varepsilon\|^2\to\frac{p-1}{2p}\|{\bf z}\|^2=\frac{p-1}{2p}\sum_{j=1}^m\|U_j\|_j^{2},
\end{equation*}
which is a contradiction with \eqref{eq:ineq6}. Then, we have ${\bf u}_\varepsilon\geq0$ and, using the strong maximum principle, we conclude ${\bf u}_{\varepsilon}>{\bf 0}$.

\end{proof}

{
}

\end{document}